\title{On the Dimension of the Hilbert Scheme of Curves}
\author{Dawei Chen}
\date{}
\theoremstyle{plain}
\newtheorem{theorem}{Theorem}[section]
\newtheorem{lemma}[theorem]{Lemma}
\newtheorem{proposition}[theorem]{Proposition}
\newtheorem{conjecture}[theorem]{Conjecture}
\theoremstyle{definition}
\newtheorem{remark}[theorem]{Remark}
\begin{document}
\bibliographystyle{plain}
\maketitle

\begin{abstract}
Consider a component of the Hilbert scheme whose general point
corresponds to a degree $d$ genus $g$ smooth irreducible and
nondegenerate curve in a projective variety $X$. We give lower
bounds for the dimension of such a component when $X$ is $\mathbb P^3, \ 
\mathbb P^4$ or a smooth quadric threefold in $\mathbb P^4$
respectively. Those bounds make sense from the asymptotic viewpoint
if we fix $d$ and let $g$ vary. Some examples are constructed using
determinantal varieties to show the sharpness of the bounds for
$d$ and $g$ in a certain range. The results can also be applied to
study rigid curves.
\end{abstract}

\tableofcontents

\section{Introduction}
In this section, we briefly recall some basic facts about Hilbert
schemes, and state the main results of this paper.

Let $P$ be the Hilbert polynomial of a subscheme in $\mathbb P^{r}$.
We can ask if there exists a good parameter space $\mathcal {H}_{P,
r}$ parametrizing all the subschemes that have $P$ as their Hilbert
polynomial. Grothendieck proved the following
fundamental result on the existence of $\mathcal {H}_{P, r}$.

\begin{theorem}
There exists a fine moduli space $\mathcal {H}_{P, r}$. Moreover, it
is a projective scheme.
\end{theorem}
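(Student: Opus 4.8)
The plan is to represent the Hilbert functor $\underline{\mathcal{H}}_{P,r}$, which assigns to a scheme $S$ the set of closed subschemes $Z \subset \mathbb{P}^r \times S$ that are flat over $S$ and whose fibers all have Hilbert polynomial $P$. The guiding idea is to convert the a priori infinite amount of data defining a subscheme into a single finite-dimensional datum, namely a point of a Grassmannian, and then to carve out the correct locus by explicit determinantal (hence closed) conditions. Since a closed subscheme of a projective Grassmannian is again projective, representability inside a Grassmannian will simultaneously yield the asserted projectivity.

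First I would establish the key uniformity input: there is an integer $m = m(P,r)$, depending only on $P$ and $r$, such that for every subscheme $Z \subset \mathbb{P}^r$ with Hilbert polynomial $P$ the ideal sheaf $\mathcal{I}_Z$ is $m$-regular in the sense of Castelnuovo--Mumford. The standard route is induction on $\dim Z$ by restricting to a general hyperplane, combined with the basic properties of regularity: if a sheaf is $m$-regular it is $m'$-regular for all $m' \ge m$, the higher cohomology $H^i(\mathbb{P}^r, \mathcal{I}_Z(m)) = 0$ vanishes for $i > 0$, the twist $\mathcal{I}_Z(m)$ is globally generated, and the multiplication maps $H^0(\mathcal{I}_Z(m)) \otimes H^0(\mathcal{O}(k)) \to H^0(\mathcal{I}_Z(m+k))$ are surjective. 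I expect this regularity bound to be the \emph{main obstacle}: it is precisely what lets a single $n \ge m$ serve all subschemes at once, and its proof is the genuinely technical heart of the construction.

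Granting the bound, fix $n \ge m$ and set $W = H^0(\mathbb{P}^r, \mathcal{O}(n))$ with $N = \dim W = \binom{n+r}{r}$. By $m$-regularity, $H^1(\mathcal{I}_Z(n)) = 0$ and $H^i(\mathcal{O}_Z(n)) = 0$ for $i > 0$, so the ideal sequence gives $\dim H^0(\mathcal{O}_Z(n)) = \chi(\mathcal{O}_Z(n)) = P(n)$ and hence $\dim H^0(\mathcal{I}_Z(n)) = N - P(n)$, a number independent of $Z$. Assigning to $Z$ the subspace $H^0(\mathcal{I}_Z(n)) \subset W$ then produces a point of the Grassmannian $G = \mathrm{Gr}(N - P(n), W)$, and global generation shows this assignment is injective since $Z$ is recovered as the subscheme cut out by those degree-$n$ forms. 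The same construction runs relatively over an arbitrary base $S$: flatness of $Z$ over $S$ together with cohomology and base change makes the relevant pushforward a subbundle of $W \otimes \mathcal{O}_S$, yielding a natural transformation from $\underline{\mathcal{H}}_{P,r}$ to $\mathrm{Hom}(-, G)$.

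It remains to identify the image and to supply the universal family. On $G$ let $\mathcal{V} \subset W \otimes \mathcal{O}_G$ be the universal subbundle, and for each $k \ge 0$ consider the induced multiplication map $\mu_k \colon \mathcal{V} \otimes H^0(\mathcal{O}(k)) \to H^0(\mathcal{O}(n+k)) \otimes \mathcal{O}_G$, whose cokernel is the degree-$(n+k)$ piece of the structure sheaf of the prospective universal family. The requirement that this cokernel be locally free of rank $P(n+k)$ is equivalent to the closed condition $\operatorname{rank}(\mu_k) \le \binom{n+k+r}{r} - P(n+k)$, cut out by the vanishing of appropriate minors; the regularity bound guarantees that finitely many values of $k$ suffice and that the resulting closed subscheme is exactly the locus of subspaces of the form $H^0(\mathcal{I}_Z(n))$ for a subscheme with Hilbert polynomial $P$ (this is the concrete incarnation of Grothendieck's flattening stratification). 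I would take $\mathcal{H}_{P,r}$ to be this determinantal subscheme, build the universal subscheme $\mathcal{Z} \subset \mathbb{P}^r \times \mathcal{H}_{P,r}$ from the restriction of the universal quotient, and verify by flat base change and the universal property of $G$ that the correspondence $(S \to \mathcal{H}_{P,r}) \mapsto \mathcal{Z} \times_{\mathcal{H}_{P,r}} S$ identifies $\mathrm{Hom}(S, \mathcal{H}_{P,r})$ with $\underline{\mathcal{H}}_{P,r}(S)$. This exhibits $\mathcal{H}_{P,r}$ as a fine moduli space, and since it is closed in the projective scheme $G$, it is projective.
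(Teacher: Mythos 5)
This theorem is stated in the paper as a classical result of Grothendieck and is given no proof there, so there is nothing internal to compare against; your sketch is the standard Grothendieck--Mumford construction (uniform Castelnuovo--Mumford regularity bound, embedding into a Grassmannian via $H^0(\mathcal I_Z(n))$, and cutting out the image by determinantal conditions realizing the flattening stratification), which is exactly the argument the paper is implicitly invoking. The outline is correct, with the expected caveat that the uniform regularity bound and the verification that finitely many rank conditions suffice are the technical points you have deferred rather than proved.
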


Very few facts about the global properties of $\mathcal H_{P, r}$
have been obtained. However, the connectedness of $\mathcal H_{P,
r}$ has been proved in Hartshorne's thesis.

\begin{theorem}
The Hilbert scheme $\mathcal H_{P,r}$ is connected for any $P$ and
$r$.
\end{theorem}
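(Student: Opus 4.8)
The plan is to prove connectedness by exhibiting a single distinguished point of $\mathcal H_{P,r}$ that lies in the same connected component as \emph{every} other point; connectedness then follows immediately. The distinguished point I would use is the one corresponding to the saturated lexicographic monomial ideal $L_P \subset S = k[x_0,\dots,x_r]$, which by Macaulay's theorem is the unique lex-segment ideal realizing the Hilbert polynomial $P$. The mechanism throughout is that a flat family $\mathcal Z \to B$ over a connected base $B$, with all fibers having Hilbert polynomial $P$, induces a morphism $B \to \mathcal H_{P,r}$ by the universal property of the fine moduli space $\mathcal H_{P,r}$; its image is connected, so any two fibers of such a family lie in the same component.

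First I would reduce an arbitrary point $[Z]$ to a Borel-fixed monomial point. Let $I$ be the saturated homogeneous ideal of $Z$. Choosing a term order and a generic coordinate change $g \in GL_{r+1}$, the associated Gröbner degeneration produces a flat family over $\mathbb A^1$ whose general fiber is $gZ$ and whose special fiber is cut out by the initial ideal $\operatorname{in}(gI)$; flatness forces the Hilbert polynomial to be the constant $P$ over the connected base $\mathbb A^1$. For generic $g$ this initial ideal is the generic initial ideal $\operatorname{gin}(I)$, a \emph{Borel-fixed} monomial ideal with Hilbert polynomial $P$. Since $GL_{r+1}$ is connected and acts on $\mathcal H_{P,r}$, the coordinate change $g$ contributes no new component. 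Hence $[Z]$ is connected to the point $[\operatorname{gin}(I)]$.

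It now suffices to connect the \emph{finitely many} Borel-fixed monomial ideals of Hilbert polynomial $P$ to the single lex point $[L_P]$, which is the combinatorial heart of the argument. I would connect each Borel-fixed ideal to $L_P$ through a chain of monomial ideals, each step being an explicit elementary move on the staircase of monomials — replacing a minimal generator $x_i m$ by $x_j m$ with $j < i$, or an analogous polarization/specialization move — realized by a one-parameter flat family that fixes the Hilbert polynomial. Ordering monomial ideals so that $L_P$ is the unique terminal object and checking that each move strictly decreases this order guarantees that the chains terminate at $[L_P]$.

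The hard part will be precisely this last step: verifying that the elementary moves can be chosen to (i) preserve the Hilbert polynomial, (ii) give genuinely flat one-parameter families, and (iii) always push a given Borel-fixed ideal toward $L_P$ without getting stuck. Controlling (i) and (iii) simultaneously requires Macaulay's characterization of admissible Hilbert functions together with careful bookkeeping of the staircase diagrams, while (ii) requires checking that the syzygies behave well under each move. Once this is established, every point of $\mathcal H_{P,r}$ is connected to the fixed point $[L_P]$, and the connectedness of $\mathcal H_{P,r}$ follows.
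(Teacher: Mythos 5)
The paper does not prove this statement at all: it is quoted as background and attributed to Hartshorne's thesis, so there is no internal proof to compare against. Your outline is a recognizable and essentially correct description of the \emph{modern} proof strategy (due in this form to Reeves and to Peeva--Stillman, and exposited in Miller--Sturmfels): every point degenerates via a Gr\"obner/generic-initial-ideal family to a Borel-fixed monomial point (Galligo's theorem guarantees $\operatorname{in}(gI)=\operatorname{gin}(I)$ is Borel-fixed for generic $g$, and the degeneration is flat over $\mathbb A^1$ with constant Hilbert polynomial), and one then links the finitely many saturated Borel-fixed ideals with Hilbert polynomial $P$ to the unique lex-segment ideal $L_P$. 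This is genuinely different from Hartshorne's original argument, which degenerates instead to ``fans'' (unions of linear subspaces with multiplicity) and connects those directly; the lex-point route you propose is cleaner conceptually and identifies a canonical smooth-ish distinguished point, at the cost of requiring Macaulay/Gotzmann machinery.

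That said, what you have written is an outline, not a proof, and the part you defer is the entire mathematical content of the theorem. Two concrete issues. First, the ``elementary move'' you name --- replacing a minimal generator $x_i m$ by $x_j m$ with $j<i$ --- does not in general preserve the Hilbert function, and naively interpolating between two monomial ideals by a one-parameter family need not be flat; the actual arguments use more delicate operations (Reeves' comparison of double-saturations and distractions, or Peeva--Stillman's sequence of steps through the lex ideals of hyperplane sections), and verifying flatness there is where the real work lies. Second, even the finiteness of saturated Borel-fixed ideals with Hilbert polynomial $P$, and the existence of a terminating order on them, rest on the Gotzmann regularity bound, which you invoke only implicitly. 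So the proposal is a correct map of the territory but does not yet constitute a proof; as a blind reconstruction of a theorem the paper itself only cites, it is a reasonable sketch of a known (and nontrivially different) route.
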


Here curves are our main interests. The Hilbert polynomial $P$ of a
curve is a linear function with leading coefficient $d$ and constant
term $1-g$, where $d$ and $g$ are the degree and genus of the
curve. In this case, we use the notation $\mathcal H_{d,g,r}$ in
stead of $\mathcal H_{P,r}$. Sometimes we will also simply use
$\mathcal H$ when there is no confusion.

Consider the dimension of $\mathcal H$. We have the following
result. See, for instance \cite[Section 1.E]{HM}, for related references.

\begin{theorem}
\label{dim} Let $C$ be a 1-dimensional subscheme in $\mathbb
P^{r}$ such that $[C]\in\mathcal H_{d,g,r}$. The tangent space of $\mathcal H$ at $[C]$ can be identified as $$ T_{[C]}\mathcal H = H^{0}(C,\mathcal N_{C/\mathbb P^{r}}), $$
where $\mathcal N_{C/\mathbb P^{r}}$ is the normal sheaf of $C$ in
$\mathbb P^{r}$. Moreover, if $C$ is a local complete intersection,
then $$ h^{0}(C, \mathcal N_{C/\mathbb P^{r}})-h^{1}(C, \mathcal
N_{C/\mathbb P^{r}})\leq \mbox{dim}_{[C]}\mathcal H\leq h^{0}(C,
\mathcal N_{C/\mathbb P^{r}}). $$
\end{theorem}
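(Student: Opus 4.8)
The plan is to reduce everything to deformation theory, exploiting the fact recorded above that $\mathcal{H}_{d,g,r}$ is a fine moduli space and hence represents the Hilbert functor. By definition the Zariski tangent space at $[C]$ is the set of morphisms $\operatorname{Spec} k[\epsilon]/(\epsilon^2) \to \mathcal{H}$ sending the closed point to $[C]$. By representability, such morphisms are in natural bijection with flat families $\mathcal{C} \hookrightarrow \mathbb{P}^r \times \operatorname{Spec} k[\epsilon]/(\epsilon^2)$ over the dual numbers whose special fiber is $C$. So the first step is to identify this set of first-order embedded deformations explicitly.

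First I would show that a flat first-order deformation of $C \subset \mathbb{P}^r$ is the same data as an $\mathcal{O}_C$-module homomorphism $\mathcal{I}_C/\mathcal{I}_C^2 \to \mathcal{O}_C$. Locally a deformation corresponds to a lift of the ideal sheaf $\mathcal{I}_C$ to an ideal $\widetilde{\mathcal{I}} \subset \mathcal{O}_{\mathbb{P}^r}[\epsilon]$ with quotient flat over $k[\epsilon]/(\epsilon^2)$; flatness forces $\widetilde{\mathcal{I}}$ to be generated by elements of the form $f + \epsilon g$, and the assignment $f \mapsto g \bmod \mathcal{I}_C$ is well defined and $\mathcal{O}_C$-linear. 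Globalizing and using $\mathcal{N}_{C/\mathbb{P}^r} = \mathcal{H}om_{\mathcal{O}_C}(\mathcal{I}_C/\mathcal{I}_C^2, \mathcal{O}_C)$ then gives $T_{[C]}\mathcal{H} = \operatorname{Hom}_{\mathcal{O}_C}(\mathcal{I}_C/\mathcal{I}_C^2, \mathcal{O}_C) = H^{0}(C, \mathcal{N}_{C/\mathbb{P}^r})$. This already yields the upper bound: for any scheme the local dimension is at most the dimension of its Zariski tangent space, so $\dim_{[C]}\mathcal{H} \le \dim T_{[C]}\mathcal{H} = h^{0}(C, \mathcal{N}_{C/\mathbb{P}^r})$.

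For the lower bound the idea is to present the completed local ring $\widehat{\mathcal{O}}_{\mathcal{H},[C]}$ as a quotient of a power series ring in $h^{0}(C, \mathcal{N}_{C/\mathbb{P}^r})$ variables by an ideal generated by obstruction elements valued in a space of dimension $h^{1}(C, \mathcal{N}_{C/\mathbb{P}^r})$. Here the hypothesis that $C$ is a local complete intersection is essential: it guarantees that $\mathcal{I}_C/\mathcal{I}_C^2$ is locally free, hence that $\mathcal{N}_{C/\mathbb{P}^r}$ is locally free of the expected rank, and, more importantly, that the obstruction to extending an $n$-th order deformation to order $n+1$ lives in $H^{1}(C, \mathcal{N}_{C/\mathbb{P}^r})$ rather than in some larger $\operatorname{Ext}$ or cotangent-complex group. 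Granting this, the number of relations needed is at most $h^{1}$, and Krull's height theorem gives $\dim_{[C]}\mathcal{H} \ge h^{0} - h^{1}$, as desired.

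The hard part will be the obstruction computation: establishing cleanly that obstructions take values in $H^{1}(C, \mathcal{N}_{C/\mathbb{P}^r})$. The natural way is to work order by order, lifting local deformations, which is always possible since $\mathcal{N}_{C/\mathbb{P}^r}$ is locally free and coherent cohomology vanishes on affines, and then measuring the failure of the local lifts to glue by a \v{C}ech $1$-cocycle valued in $\mathcal{N}_{C/\mathbb{P}^r}$; its class in $H^{1}(C, \mathcal{N}_{C/\mathbb{P}^r})$ is the obstruction, vanishing exactly when a global lift exists. Alternatively one can invoke the standard deformation-obstruction machinery for lci subschemes, for which the controlling pair is $(H^{0}(C, \mathcal{N}_{C/\mathbb{P}^r}), H^{1}(C, \mathcal{N}_{C/\mathbb{P}^r}))$; see \cite[Section 1.E]{HM}. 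Both inequalities then follow formally from this tangent-obstruction structure.
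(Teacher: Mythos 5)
Your proposal is correct, and it is precisely the standard tangent--obstruction argument: the paper itself offers no proof of this theorem, merely citing \cite[Section 1.E]{HM}, and your sketch reproduces the argument found there (first-order embedded deformations as $\mathrm{Hom}(\mathcal I_C/\mathcal I_C^2,\mathcal O_C)$, the Zariski tangent space upper bound, obstructions in $H^1(C,\mathcal N_{C/\mathbb P^r})$ via the lci hypothesis, and the $h^0-h^1$ lower bound from presenting the complete local ring as a power series ring modulo at most $h^1$ obstruction equations plus Krull's height theorem). The one imprecise point is your justification of local liftability over a small extension: this comes from the lci hypothesis (any lift of the local defining regular sequence is automatically flat), not from vanishing of coherent cohomology on affines, but this does not change the structure or validity of the argument.
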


Let $U$ be a component of $\mathcal H_{d,g,r}$ whose general point
corresponds to a smooth irreducible and nondegenerate curve $C$. Also let
$l_{d,g,r}$ be the lower bound for the dimension of all such components $U$.
Our aim is to estimate $l_{d,g,r}$. Define a number $h_{d,g,r} = \chi(\mathcal N_{C/\mathbb P^{r}}) = h^{0}(C, \mathcal N_{C/\mathbb P^{r}})-h^{1}(C, \mathcal N_{C/\mathbb
P^{r}}) = (r+1)d-(r-3)(g-1)$. By Theorem \ref{dim}, we know that
$l_{d,g,r}\geq h_{d,g,r}$. However, this bound $h_{d,g,r}$ may not be good in many cases.

For the beginning case $r = 3$, $h_{d,g,3} = 4d$ is independent of $g$. If we fix $d$ and let $g$ vary, the genus of a degree
$d$ irreducible and nondegenerate curve in $\mathbb P^{3}$ can be as large as the Castelnuovo bound $\pi(d,3) = \frac{d^{2}}{4}+O(d)$. One can refer to \cite[Section 3]{H} for a good introduction on the Castelnuovo theory and related results. When $g$ approaches
$\pi(d,3)$, we can compute $l_{d,g,3}$ explicitly. Roughly speaking,
$l_{d,g,3}$ is asymptotically equal to $g$, which is much larger than $4d$. Therefore, it would be nice if we can come up with an improved lower bound.

\begin{theorem}
\label{r=3}
Define an integer-valued function $\mu(d,g)$ in the range $g^{2}\geq d^{3}$ as follows,
$$ \mu(d,g) = 1+\lfloor\frac{d^{2}-3d-2g}{g+d+\sqrt{g^{2}-d^{3}+4dg+4d^{2}}}\rfloor, $$
where $\lfloor \cdot\rfloor$ is the floor function.
Then for any $d \geq 3$ and $g \leq \pi(d,3)$, we have
$$ l_{d,g,3} \geq \begin{cases}
4d, &\text{if $g^{2}< d^{3}$}; \\
4d+g-1-\mu(d,g)d, &\text{if $g^{2}\geq d^{3}$}.
\end{cases} $$
\end{theorem}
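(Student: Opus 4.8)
The plan is to treat the two ranges separately. When $g^{2}<d^{3}$ there is nothing to prove beyond Theorem \ref{dim}: since $h_{d,g,3}=\chi(\mathcal N_{C/\mathbb P^{3}})=4d$ and $\dim_{[C]}\mathcal H\geq h^{0}-h^{1}=\chi$, every such component already has dimension at least $4d$. All of the work lies in the range $g^{2}\geq d^{3}$, where the extra term $g-1-\mu d$ cannot come from the ambient deformation theory of $\mathbb P^{3}$ (which only ever yields $4d$) and must instead be produced from the geometry of a surface containing the general curve of the component.

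First I would reduce to a single curve: fix a component $U$ and let $C$ be a general point, so $\dim U=\dim_{[C]}\mathcal H$. The estimate then rests on two inputs applied to $C$: (i) that $C$ lies on a surface $S\subset\mathbb P^{3}$ of degree $s\leq\mu(d,g)$, and (ii) a deformation bound for curves on $S$. For (ii), suppose for the moment $S$ is smooth along $C$, so $C$ is a Cartier divisor on $S$. The Hilbert scheme $\mathrm{Hilb}(S)$ of divisors on $S$ includes into $\mathcal H$, and the exact analogue of Theorem \ref{dim} on the smooth surface $S$ gives
\[ \dim_{[C]}\mathcal H\geq \dim_{[C]}\mathrm{Hilb}(S)\geq \chi(\mathcal N_{C/S})=\chi(\mathcal O_{C}(C))=C^{2}+1-g. \]
Since $\omega_{S}=\mathcal O_{S}(s-4)$, adjunction gives $C^{2}=2g-2-(s-4)d$, whence
\[ \dim_{[C]}\mathcal H\geq (2g-2-(s-4)d)+1-g=4d+g-1-sd\geq 4d+g-1-\mu(d,g)\,d, \]
the last step because $s\leq\mu(d,g)$. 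Thus the claimed bound follows at once from (i).

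To see why the threshold is exactly $\mu(d,g)$ I would first simplify the defining expression. Writing the discriminant as $g^{2}-d^{3}+4dg+4d^{2}=(g+2d)^{2}-d^{3}$ and rationalizing against $g+d-\sqrt{(g+2d)^{2}-d^{3}}$, one verifies the identity $(g+d)^{2}-((g+2d)^{2}-d^{3})=d(d^{2}-3d-2g)$, which collapses the floor argument to
\[ \mu(d,g)=1+\left\lfloor \frac{g+d-\sqrt{(g+2d)^{2}-d^{3}}}{d}\right\rfloor. \]
This is precisely the inversion of the Hodge-index bound: on a smooth degree-$s$ surface $(C\cdot H)^{2}\geq C^{2}H^{2}$ forces $C^{2}\leq d^{2}/s$, hence $g\leq \tfrac{d^{2}}{2s}+\tfrac{(s-4)d}{2}+1$, and $\mu(d,g)$ is the least degree compatible with $(d,g)$. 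The role of $g^{2}\geq d^{3}$ is exactly that $g$ then lies above the minimum over $s$ of this genus bound (attained near $s\approx\sqrt d$), so that the root under the radical is real and a surface of the predicted degree can occur.

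The heart of the argument, and the step I expect to be the main obstacle, is therefore establishing (i): that a smooth irreducible nondegenerate curve with $g^{2}\geq d^{3}$ must lie on a surface of degree at most $\mu(d,g)$, with the floor making $\mu(d,g)$ the sharp integer threshold. This is a Castelnuovo--Halphen statement: if $C$ lay on no surface of degree $\leq\mu(d,g)$, then Halphen's bound applied to curves of minimal surface degree $\geq\mu+1$ would force $g$ strictly below the value permitted by the definition of $\mu$ (one checks that the relevant inequality reverses in the Halphen range $d>\mu(\mu+1)$, which holds here since $\mu\lesssim\sqrt d$), a contradiction; concretely I would extract the surface by comparing $h^{0}(\mathcal O_{\mathbb P^{3}}(\mu))=\binom{\mu+3}{3}$ against a Castelnuovo-type bound on $h^{0}(\mathcal O_{C}(\mu))$. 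A secondary technical point is the smoothness used in (ii): the minimal-degree surface may be singular, so I would either take a general member of $|\mathcal I_{C}(\mu)|$ when that system is at least a pencil (smooth along $C$ by Bertini), or work directly with the dualizing sheaf of the normal Gorenstein surface, checking that adjunction and the deformation estimate survive as long as $C$ avoids its singular locus.
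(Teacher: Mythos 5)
Your overall strategy is exactly the paper's: in the range $g^{2}<d^{3}$ fall back on $\chi(\mathcal N_{C/\mathbb P^{3}})=4d$; in the range $g^{2}\geq d^{3}$ invoke the Halphen bound (proved by Gruson--Peskine, and cited by the paper in the slightly massaged form $g>\frac{d}{2}(s+\frac{d}{s+1}-3)\Rightarrow C$ lies on a surface of degree $\leq s$) to put $C$ on a surface $S$ of degree $\leq\mu(d,g)$, and then bound $\dim_{[C]}\mathcal H_{d,g}(S)$ from below by $\chi(\mathcal N_{C/S})=4d+g-1-sd$. Your rationalization showing $\mu(d,g)=1+\lfloor(g+d-\sqrt{(g+2d)^{2}-d^{3}})/d\rfloor$ is correct and is precisely the smaller root of the quadratic obtained from the Gruson--Peskine inequality, so $\mu$ is indeed the least admissible $s$; your adjunction computation of $\chi(\mathcal N_{C/S})$ agrees with the paper's (which instead uses the normal bundle sequence and $\mathcal N_{S/\mathbb P^{3}}\otimes\mathcal O_{C}=\mathcal O_{C}(k)$).

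The one genuine gap is the step you call a ``secondary technical point,'' which is where the paper spends most of its effort. Neither of your proposed fixes for a singular $S$ works as stated. Bertini does not make a general member of $|\mathcal I_{C}(\mu)|$ smooth along $C$, because $C$ sits inside the base locus of that system; in characteristic $0$ one only gets finitely many singular points, possibly lying on $C$, and when $\mu$ is the minimal degree the system may consist of a single, badly singular surface. Likewise there is no reason $C$ should avoid $S_{\mathrm{sing}}$: the paper only proves (via B\'ezout, using $\deg S_{\mathrm{sing}}\leq s(s-1)<d$) that $C\cap S_{\mathrm{sing}}$ is finite, and then must still justify the deformation bound when $C$ passes through singular points of $S$. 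That justification is nontrivial: the paper invokes Koll\'ar's generic unobstructedness result to get the lower bound $\dim\mathrm{Hom}(\mathcal I_{C/S}/\mathcal I_{C/S}^{2},\mathcal O_{C})-\dim\mathrm{Ext}^{1}(\mathcal I_{C/S}/\mathcal I_{C/S}^{2},\mathcal O_{C})$, verifies by hand the left-exactness of the conormal/K\"ahler differential sequences when $C\cap S_{\mathrm{sing}}$ is finite, and shows $\mathrm{Ext}^{2}(\Omega_{S}\otimes\mathcal O_{C},\mathcal O_{C})=0$ so that this Ext difference still equals $\chi(\mathcal N_{C/\mathbb P^{3}})-\chi(\mathcal N_{S/\mathbb P^{3}}|_{C})=4d+g-1-kd$. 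Without some argument of this kind your inequality is only established when the minimal-degree surface happens to be smooth along $C$.
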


The function $\mu$ invloved in Theorem \ref{r=3} may look confusing. But let us analyze
this new bound a little bit. If $g^{2}\geq d^{3}$, we always have $g-1-\mu(d,g)d>0$. Moreover, if we fix $d$, $g-1-\mu(d,g)d$ is an
increasing function of $g$. It implies that in the range $g^{2}\geq d^{3}$, $l_{d,g,3}$ is strictly larger than the expected dimension $4d$. It actually goes up to $g$ when the genus approaches the Castelnuovo bound $\pi(d,3)$, which has been already predicted by the Castelnuovo theory.

We can also present an example to show the power of this bound. Suppose $d = 100$ and $g$ can vary from $0$ to
the Castelnuovo bound $\pi(100,3) = 2401$. Pick $g = 1100$, which is large but not close to the Castelnuovo bound.
The bound $4d$ only tells us that $l_{100,1100,3}\geq 400$. However, by Theorem \ref{r=3}, we get
$l_{100, 1100, 3}\geq 1099$, which is much better.

Now consider the case $r\geq 4$. The number $h_{d,g,r} = (r+1)d-(r-3)(g-1)$ could be negative if $g$ is larger than $d$. So it makes sense to find at least a positive bound for $l_{d,g,r}$. Furthermore, it may help answer a question about rigid curves.

A rigid curve in $\mathbb P^{r}$ is a smooth irreducible and nondegenerate curve that does not have any deformation except those induced from
the automorphisms of $\mathbb P^{r}$. Apparently, rational normal curves are rigid. To the author's best knowledge, people have not found any other rigid curves. In \cite{HM}, Harris and Morrison conjectured that there does not exist a rigid curve except rational normal curves. One way to attack this conjecture
is to bound $l_{d,g,r}$. For instance, if the equality $l_{d,g,r} > $ dim PGL($r)=r^{2}+2r$ holds, there cannot exist a degree $d$ genus $g$ rigid curve in $\mathbb P^{r}$. In fact, this is one of our motivations to study $l_{d,g,r}$.

For the case $r=4$, we have the following result.
\begin{theorem}
\label{r=4}
Let $C$ be a degree $d$ genus $g$ smooth irreducible and nondegenerate curve in $\mathbb P^{4}$. Fix $d$ and let $g$ vary. If $g>3d\sqrt{d}+O(d)$, then $C$ is not rigid.
\end{theorem}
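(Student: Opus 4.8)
The plan is to deduce non-rigidity from the criterion recalled just above: it suffices to exhibit, for the component $U \subseteq \mathcal{H}_{d,g,4}$ containing $[C]$, the inequality $\dim_{[C]} U > \dim \mathrm{PGL}(4) = 24$, where $\mathrm{PGL}(4)$ is the (twenty-four dimensional) automorphism group of $\mathbb{P}^4$. The bound furnished directly by Theorem \ref{dim}, namely $\dim_{[C]} U \geq \chi(\mathcal{N}_{C/\mathbb{P}^4}) = 5d-(g-1)$, is useless here because it turns negative once $g>5d$. Hence the real task is to manufacture an improved lower bound, and the natural source is the estimate in $\mathbb{P}^3$ already supplied by Theorem \ref{r=3}, transported to $\mathbb{P}^4$ by projection.

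First I would fix a general point $p \in \mathbb{P}^4$ and project. Since the secant and tangent varieties of $C$ have dimension at most $3$ and $2$ respectively, a general $p$ lies on no secant or tangent line, so $\pi_p$ restricts to an isomorphism of $C$ onto a smooth, irreducible, nondegenerate curve $C' \subseteq \mathbb{P}^3$ of the same degree $d$ and genus $g$, with $\mathcal{O}_{C'}(1) \cong \mathcal{O}_{C}(1)$ under $\pi_p$. In particular $g \leq \pi(d,3)$, and since the hypothesis $g>3d\sqrt{d}$ forces $g^{2}>9d^{3}>d^{3}$, the curve $C'$ lies in the range where Theorem \ref{r=3} applies. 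Thus the component $U_3 \subseteq \mathcal{H}_{d,g,3}$ containing $[C']$ satisfies
$$ \dim U_3 \geq l_{d,g,3} \geq 4d+g-1-\mu(d,g)\,d. $$

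Next I would pass back from $\mathbb{P}^3$ to $\mathbb{P}^4$ by a lifting construction. A curve in $\mathbb{P}^4$ projecting to $C'$ from $p$ is exactly a section of the cone over $C'$ with vertex $p$; concretely it is recorded by a choice of fifth homogeneous coordinate, that is a section $\lambda \in H^{0}(C', \mathcal{O}_{C'}(1))$ not lying in the span $V_4$ of the four coordinates of $C' \hookrightarrow \mathbb{P}^3$. Deforming $C'$ inside $U_3$ and simultaneously deforming $p$ and $\lambda$ then produces an irreducible family $\mathcal{F} \subseteq \mathcal{H}_{d,g,4}$ of smooth nondegenerate curves through $[C]$, whence $\mathcal{F} \subseteq U$ and $\dim U \geq \dim \mathcal{F}$. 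Organizing this as the incidence correspondence $I=\{(C,p)\}$ with its two projections to $\mathcal{H}_{d,g,4}$ and $\mathcal{H}_{d,g,3}$, a parameter count (the $4$ dimensions of $p$, the $h^{0}(C',\mathcal{O}_{C'}(1))$ dimensions of $\lambda$, and the $4$-dimensional redundancy coming from re-choosing the center of projection) would express $\dim \mathcal{F}$, and hence a lower bound for $\dim U$, in terms of $h^{0}(\mathcal{O}_{C'}(1))$ and the dimension of the locus of deformations of $C'$ that actually admit a lift.

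The hard part is precisely this last locus. A lift exists only when there is a section $\lambda \notin V_4$, i.e. only when $h^{0}(C',\mathcal{O}_{C'}(1)) \geq 5$, so $C'$ must fail to be linearly normal; and because $h^{0}(\mathcal{O}(1))$ is only upper semicontinuous, this condition need not propagate to nearby members of $U_3$. Thus the honest bound involves not all of $U_3$ but the non-linearly-normal (equivalently, liftable, equivalently special-hyperplane-series) sublocus $W' \subseteq U_3$, and the crux of the argument is to bound the codimension of $W'$, equivalently the fiber dimension of the projection $I \to \mathcal{H}_{d,g,3}$. This is where the gap between the crude transport of Theorem \ref{r=3} and the true bound for $\mathbb{P}^4$ resides; an alternative, and perhaps more robust, route to controlling it is to argue intrinsically in $\mathbb{P}^4$ that a curve of such high genus is forced onto a surface (or threefold) of small degree and to count deformations there, mirroring the Castelnuovo analysis behind Theorem \ref{r=3}. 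In either case, quantifying this loss and using the asymptotics $\mu(d,g)\sim d^{2}/(2g)$ in the relevant range is what pins the threshold at $g>3d\sqrt{d}+O(d)$: for such $g$ the resulting lower bound for $\dim U$ overtakes $24$, so $C$ carries deformations beyond those induced by automorphisms of $\mathbb{P}^4$ and is therefore not rigid.
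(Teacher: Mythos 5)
Your reduction to the inequality $\dim_{[C]}U>24$ is the right starting point, and the projection to $\mathbb{P}^3$ is set up correctly (a general center of projection misses the secant variety of $C$, so $C'\subset\mathbb{P}^3$ is smooth, irreducible, nondegenerate of the same $d$ and $g$). But the argument has a genuine gap exactly where you flag it, and it is not one that can be waved away: Theorem \ref{r=3} bounds the dimension of the \emph{entire} component $U_3\ni[C']$ of $\mathcal{H}_{d,g,3}$, whereas only those deformations of $C'$ that remain non-linearly-normal (i.e.\ keep $h^0(\mathcal{O}_{C'}(1))\geq 5$) admit a lift back to $\mathbb{P}^4$. That locus $W'$ is cut out inside $U_3$ by a semicontinuity (hence closed) condition and a priori can have arbitrarily large codimension; nothing in Theorem \ref{r=3} or in your incidence correspondence bounds $\dim W'$ from below, and trying to bound it by running the correspondence backwards from $U$ is circular. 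Since this loss is never quantified, the threshold $g>3d\sqrt{d}+O(d)$ is asserted rather than derived; indeed, if the transport worked with no loss it would give non-rigidity already for $g$ on the order of $d^{3/2}$ with a much smaller constant, which should itself raise suspicion.

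The paper takes precisely the ``alternative route'' you mention in one sentence but do not execute, and the substance of the proof lies in carrying it out. Two applications of the Castelnuovo bound after re-embedding $\mathbb{P}^4$ by Veronese maps (Lemmas \ref{F} and \ref{G}) produce irreducible threefolds $F$ and $G$ of degrees $a\leq k$ and $b\leq l$ containing $C$ with $S=F\cap G$ a complete intersection surface; a hyperplane-section and B\'{e}zout count (Lemmas \ref{Sing} and \ref{S}) guarantees $C\not\subset S_{sing}$ once $d>\frac{1}{2}ab(a+b-2)$; and Proposition \ref{cx} --- an Ext-group computation replacing $\chi(\mathcal{N}_{C/S})$, needed because $S$ may be singular along a curve --- yields $\dim_{[C]}\mathcal{H}_{d,g}(S)\geq 5d+g-1-(a+b)d$. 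The numerics for $g>3d\sqrt{d}+O(d)$ then give $(a+b)d<g$, hence a deformation space of dimension at least $5d>24$. None of these ingredients (the second threefold $G$, the control of $S_{sing}$, the deformation bound on a possibly singular complete intersection) appears in your proposal, so as written it does not constitute a proof.
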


Here we could be more precise on the range of $d$ and $g$ as we have done in Theorem
\ref{r=3}. However, we choose to only focus on the asymptotic behavior, since
the order $d\sqrt{d}$ seems to be important. Currently we have not been able to extend the result to $r\geq 5$. But combining the results in \cite{CCG}, we expect the following conjecture to hold in general.

\begin{conjecture}
\label{r>4}
For $r\geq 5$, there always exists a constant $\lambda_{r}$ such that if $g\geq \lambda_{r}d\sqrt{d}+O(d)$, a degree $d$ genus $g$ smooth irreducible
and nondegenerate curve in $\mathbb P^{r}$ is not rigid.
\end{conjecture}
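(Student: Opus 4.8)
The plan is to prove the conjecture by the same mechanism that underlies Theorems \ref{r=3} and \ref{r=4}: I would produce a lower bound for $l_{d,g,r}$ that grows like $d\sqrt d$ in the relevant range, and then invoke the criterion that $l_{d,g,r}>\dim\mathrm{PGL}(r)=r^2+2r$ forces non-rigidity. Since $r^2+2r$ is a constant once $r$ is fixed, while the target bound is of order $d^{3/2}$, the real content is to exhibit, for every smooth irreducible nondegenerate curve $C$ of degree $d$ and genus $g$ with $g\geq\lambda_r d\sqrt d+O(d)$, an algebraic family of curves through $[C]$ inside $\mathcal H_{d,g,r}$ whose dimension exceeds $r^2+2r$; taking the minimum over components then gives $l_{d,g,r}>r^2+2r$.

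First I would establish a surface-containment statement, namely a generalized Halphen-type bound in $\mathbb P^r$ asserting that a degree $d$, genus $g$ curve not contained in any surface of degree at most $e$ has genus bounded by roughly $d^2/(2e)+O(d)$. Granting this, $g\geq\lambda_r d\sqrt d+O(d)$ forces $C$ to lie on an irreducible surface $S\subset\mathbb P^r$ of degree $e$ of order $\sqrt d$; the threshold $e\sim d^2/(2g)$ is exactly what makes $g$ of order $d^{3/2}$ correspond to $e$ of order $\sqrt d$, mirroring the role of the range $g^2\geq d^3$ and the function $\mu(d,g)$ in Theorem \ref{r=3}. The relevant $S$ are not surfaces of minimal degree; they are the low-degree surfaces cut out by the minimal-degree hypersurfaces through $C$, and it is essential that they be Castelnuovo-type surfaces of small sectional genus (scrolls, ruled surfaces and their degenerations), which is where I expect to lean on the results of \cite{CCG}.

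Next I would run the dimension count. With $C\subset S$, moving $C$ in the linear system $|C|$ on $S$ already produces a family of curves of the same degree and genus inside $\mathcal H_{d,g,r}$, and one can organize this through the exact sequence
$$ 0\to\mathcal N_{C/S}\to\mathcal N_{C/\mathbb P^{r}}\to\mathcal N_{S/\mathbb P^{r}}|_{C}\to 0, $$
so that by Theorem \ref{dim} we get $\dim_{[C]}\mathcal H\geq h^{0}(C,\mathcal N_{C/\mathbb P^{r}})\geq\dim|C|_{S}$. On a (desingularized) surface of degree $e$, Riemann--Roch and adjunction give $\dim|C|_{S}\geq\chi(\mathcal O_{S}(C))-1=g-2-C\cdot K_{S}+\chi(\mathcal O_{S})$, whose leading term is $g$ provided the correction $C\cdot K_{S}$, which is at most of order $de\sim d^{3/2}$, is dominated. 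Because the relevant $S$ have small sectional genus, $K_{S}$ is controlled and this correction stays of order $d\sqrt d$ with a favorable sign; choosing $\lambda_r$ so that $g\geq\lambda_r d\sqrt d+O(d)$ outpaces it would yield $\dim_{[C]}\mathcal H\geq g-O(d\sqrt d)>r^{2}+2r$ for $d$ large, and hence the claimed non-rigidity. The constant $\lambda_r$ is precisely the ratio at which the genus overtakes the canonical correction term.

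The main obstacle is the surface-containment step for $r\geq 5$. In $\mathbb P^{3}$ every curve lies on a surface and Halphen's theory gives a sharp bound on the minimal such degree, while for $\mathbb P^{4}$ one can work with the quadric threefold and complete intersections; but for $r\geq 5$ the low-degree surfaces are not complete intersections, their Hilbert schemes are only partially understood, and a clean generalized Halphen bound controlling the minimal degree of a surface through $C$ is not yet available. A secondary difficulty is that $S$ may be singular (cones and non-normal scrolls), so $\mathcal N_{C/S}$ and $\dim|C|_{S}$ must be computed on a resolution with care to keep the genus constant in the family, and one must verify that the surface type is Castelnuovo so that $C\cdot K_{S}$ does not erode the $g$ term. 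This is exactly why the statement is phrased as a conjecture, and why pinning down $\lambda_r$ explicitly, as was done for $r=3$ through $\mu(d,g)$, remains out of reach in general.
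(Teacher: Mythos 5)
This statement is a conjecture in the paper, not a theorem: the author says explicitly that the argument for $\mathbb P^{4}$ could not be extended to $r\geq 5$, and no proof is given anywhere in the paper. Your proposal follows the same template the paper uses for Theorems \ref{r=3} and \ref{r=4} --- force $C$ onto a surface $S$ of degree of order $\sqrt{d}$, then bound the deformations of $C$ inside $S$ from below --- and, like the paper, it points to \cite{CCG} as the likely source of the containment statement. But as written it is a strategy, not a proof, and you concede the decisive gap yourself: the generalized Halphen/Gruson--Peskine bound in $\mathbb P^{r}$ (that genus larger than roughly $d^{2}/(2e)$ forces $C$ onto a surface of degree at most $e$) is exactly the ingredient that is not available for $r\geq 5$. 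Since every subsequent step is conditional on that input, your proposal leaves the conjecture exactly as open as the paper does.

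Two further points would need repair even if the containment step were granted. First, your dimension count cites Theorem \ref{dim} in the form $\dim_{[C]}\mathcal H\geq h^{0}(C,\mathcal N_{C/\mathbb P^{r}})$; this inequality goes the wrong way --- $h^{0}$ is the \emph{upper} bound in Theorem \ref{dim}, and the lower bound it provides is $\chi(\mathcal N_{C/\mathbb P^{r}})$, which is precisely what is too weak here (it can even be negative for large $g$). The inequality $\dim_{[C]}\mathcal H\geq\dim|C|_{S}$ must instead be justified by exhibiting the linear system as a flat family inside $\mathcal H_{d,g,r}$, or, as the paper does, by the Ext-theoretic lower bound of Propositions \ref{K3} and \ref{cx}. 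Second, those propositions require $S$ to be a local complete intersection with $C\cap S_{sing}$ finite; in $\mathbb P^{4}$ the paper manufactures this by construction ($S=F\cap G$, via Lemmas \ref{G}, \ref{Sing} and \ref{S}), but for $r\geq 5$ the low-degree surfaces one would obtain (scrolls, cones, Castelnuovo-type surfaces) are typically \emph{not} complete intersections, so both the unobstructedness input and the control of $C\cdot K_{S}$ and of the arithmetic genus in a family over a desingularization require genuinely new arguments, which your sketch flags but does not supply.
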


In addition to projective spaces, we can also study the deformation of curves on a hypersurface. The beginning case would be a smooth quadric threefold in $\mathbb P^{4}$. Since all the smooth quadrics in $\mathbb P^{4}$ are isomorphic, we fix one and denote it by $Q$. Let $\mathcal H_{d, g}(Q)$ be the union of components of the
Hilbert scheme whose general point parameterizes a degree $d$ genus $g$ smooth irreducible and nondegenerate curve on $Q$. Here a nondegenerate curve means that it is not contained in a $\mathbb P^{3}$. For a curve [$C$]$\in \mathcal H_{d,g}(Q)$, as in Theorem \ref{dim},
$\mathcal X(\mathcal N_{C/Q})= h^{0}(\mathcal N_{C/Q})-h^{1}(\mathcal N_{C/Q}) = 3d$ provides a lower bound for the dimension of any component in $\mathcal H_{d,g}(Q)$. We can still ask how good this lower bound would be. A similar result as Theorem \ref{r=4} can be established as follows.

\begin{theorem}
\label{quadric}
If $g> \frac{1}{\sqrt{2}} d\sqrt{d} + O(d)$, then the dimension of any component of $\mathcal H_{d,g}(Q)$ is strictly greater than the expected dimension $3d$. On the other hand, if $g<\frac{2}{15\sqrt{5}} d\sqrt{d} + O(d)$, then there always exists a component of $\mathcal H_{d,g}(Q)$ whose dimension equals $3d$.
\end{theorem}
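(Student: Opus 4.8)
The plan is to prove the two halves of Theorem~\ref{quadric} by opposite mechanisms: the lower bound (dimension $>3d$ for large $g$) by exhibiting inside $\mathcal{H}_{d,g}(Q)$ a family of curves too large to fit in a component of the expected dimension, and the sharpness (dimension $=3d$ for small $g$) by producing curves $C$ with $h^{1}(\mathcal{N}_{C/Q})=0$, which are automatically smooth points of a $3d$-dimensional component. The common tool is the surface section $S=Q\cap V(F)$ with $\deg F=k$: by adjunction $K_{S}=(k-3)H_{S}$ where $H_{S}=H|_{S}$, $H_{S}^{2}=2k$, and a degree-$d$ curve $C\subset S$ has $C^{2}=2g-2-(k-3)d$. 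I will repeatedly use $h^{0}(Q,\mathcal{O}_{Q}(k))=\frac{(k+1)(k+2)(2k+3)}{6}$ and $\chi(\mathcal{O}_{S})=1+\frac{(k-1)(k-2)(2k-3)}{6}$, whose sum is $\frac{2k^{3}+13k}{3}+1$.

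For the first assertion, I would start from the Hodge index inequality $C^{2}\le (C\cdot H_{S})^{2}/H_{S}^{2}=d^{2}/(2k)$, which bounds the genus of a degree-$d$ curve on a degree-$k$ section by $g_{\max}(d,k)=1+\frac{(k-3)d}{2}+\frac{d^{2}}{4k}$. As a function of $k$ this attains its \emph{minimum} $\frac{1}{\sqrt{2}}d^{3/2}+O(d)$ at $k=\sqrt{d/2}$; consequently the hypothesis $g>\frac{1}{\sqrt{2}}d^{3/2}+O(d)$ forces the minimal surface section through $C$ to have small degree and $C$ to be near-extremal on it, so that $\mathcal{O}_{S}(C)$ is nonspecial and $h^{1}(\mathcal{O}_{S}(C))=h^{2}(\mathcal{O}_{S}(C))=0$. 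I would then move $C$ in $|C|_{S}$ and move $S$ among degree-$k$ sections, producing a family through $[C]$ of dimension at least
$$ \big(h^{0}(Q,\mathcal{O}_{Q}(k))-1\big)+\big(h^{0}(\mathcal{O}_{S}(C))-1\big)-\big(h^{0}(Q,\mathcal{I}_{C/Q}(k))-1\big). $$
Inserting $h^{0}(\mathcal{O}_{S}(C))=\chi(\mathcal{O}_{S})+g-1-(k-3)d$ and the numerology above reduces ``dimension $>3d$'' to $g>kd-\frac{2k^{3}+13k}{3}+O(1)$. The right-hand side is maximized near $k=\sqrt{d/2}$ with value $\frac{2}{3\sqrt{2}}d^{3/2}+O(d)$, strictly below $\frac{1}{\sqrt{2}}d^{3/2}$, so the hypothesis yields the strict inequality for every component regardless of which minimal $k$ occurs.

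For the second assertion I would build, for each $(d,g)$ in the stated range, a smooth irreducible nondegenerate determinantal curve $C\subset Q$ and check $h^{1}(\mathcal{N}_{C/Q})=0$; then $[C]$ is a smooth point and its component has dimension $h^{0}(\mathcal{N}_{C/Q})=\chi(\mathcal{N}_{C/Q})=3d$. Concretely I would take $C$ to be the locus where a general morphism $\phi\colon\bigoplus_{i}\mathcal{O}_{Q}(-b_{i})\to\bigoplus_{j}\mathcal{O}_{Q}(-a_{j})$ of bundles whose ranks differ by one fails to be surjective; this locus has codimension $2$ in $Q$ and is a smooth curve by Bertini. The Eagon--Northcott resolution of $\mathcal{I}_{C/Q}$ expresses $d$ and $g$ as explicit functions of the twists $\{a_{j}\},\{b_{i}\}$ and lets me compute the cohomology governing $\mathcal{N}_{C/Q}$, giving $h^{1}(\mathcal{N}_{C/Q})=0$ once the twists are sufficiently spread out. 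Maximizing the genus attainable for a fixed degree over the admissible twists, subject to this vanishing, is what I expect to yield the threshold $\frac{2}{15\sqrt{5}}d^{3/2}+O(d)$.

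I expect the main difficulty in the first part to be not the dimension count but its justification: one must verify that a general member of $|C|_{S}$ is smooth, irreducible and nondegenerate, that the restriction maps are surjective so the family truly has the stated dimension, and above all that the extremality forced by $g>\frac{1}{\sqrt{2}}d^{3/2}+O(d)$ really delivers the vanishing $h^{1}(\mathcal{O}_{S}(C))=0$ uniformly---this is exactly why the admissible threshold is the minimum $\frac{1}{\sqrt{2}}d^{3/2}$ of $g_{\max}(d,k)$ rather than the smaller value $\frac{2}{3\sqrt{2}}d^{3/2}$ that the arithmetic alone would allow. In the second part the crux is the vanishing $h^{1}(\mathcal{N}_{C/Q})=0$, since determinantal curves are unobstructed only for twists in a bounded window; carrying out the resulting discrete optimization sharply enough to extract the constant $\frac{2}{15\sqrt{5}}$ is where I anticipate the real work.
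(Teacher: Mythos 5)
Both halves of your plan contain a genuine gap, and in both cases it is the same kind of gap: the decisive input is a theorem you have not supplied and cannot get from the tools you name.

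For the first half, the Hodge index inequality runs in the wrong logical direction. It says that \emph{if} $C$ lies on a section $S=Q\cap V(F)$ with $\deg F=k$ (and $S$ is irreducible with $C^2$ defined), \emph{then} $g\le 1+\frac{(k-3)d}{2}+\frac{d^2}{4k}$. What you need is the converse-type statement: that $g>\frac{1}{\sqrt2}d^{3/2}+O(d)$ \emph{forces} $C$ to lie on a section of small degree $k\approx\sqrt{d/2}$. This does not follow from your inequality, because the bound $\frac{kd}{2}+\frac{d^2}{4k}$ is large again for $k$ \emph{above} $\sqrt{d/2}$: the only a priori control on the minimal $k_0$ with $h^0(\mathcal I_{C/Q}(k_0))>0$ is $k_0\lesssim\sqrt{3d}$ (from $h^0(\mathcal O_Q(k))\sim k^3/3$ versus $h^0(\mathcal O_C(k))\sim kd$), and at $k_0\sim\sqrt{3d}$ your Hodge bound allows $g$ up to roughly $\frac{7}{4\sqrt3}d^{3/2}>\frac{1}{\sqrt2}d^{3/2}$, so no contradiction arises and the ``minimal section has small degree'' claim is unproved. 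The paper closes exactly this gap by citing De Cataldo's Halphen--Gruson--Peskine-type theorem for the quadric threefold: if $d>2k(k-1)$ and $g>\frac{d^2}{4k}+\frac{1}{2}kd$, then $C$ lies on an integral surface in $|\mathcal O_Q(a)|$ with $a\le k$. That theorem is proved by Castelnuovo-style analysis of hyperplane sections, not by intersection theory on a surface, and it is where the constant $\frac{1}{\sqrt2}$ actually comes from. (Once it is granted, the paper finishes more cheaply than you propose: by its Proposition on deformations inside a possibly singular complete intersection, every component through $[C]$ has dimension at least $\chi(\mathcal N_{C/X})=3d+g-ad-1>3d$, with no need for $h^1(\mathcal O_S(C))=0$ or smoothness of $S$.)

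For the second half, a purely determinantal construction cannot cover the stated range. For fixed $d$ the admissible twist vectors produce only a sparse, discrete set of genera, whereas the theorem requires a $3d$-dimensional component for \emph{every} $g$ below $\frac{2}{15\sqrt5}d^{3/2}+O(d)$. The paper's determinantal curves $\Gamma=Q\cap S$ ($S$ the $t\times t$ minors of a $t\times(t+1)$ matrix of linear forms) give only $d_\Gamma=t(t+1)$, $g_\Gamma=\frac23t^3+\cdots$; the range is then filled in by attaching to $\Gamma$ twisted cubics on hyperplane sections of $Q$ meeting it in $\delta\le4$ points (and lines meeting it in one point), checking $H^1(\mathcal N_{\gamma/Q}(-P_1-\cdots-P_\delta))=0$, and invoking Sernesi's smoothing lemma to smooth the nodal union while preserving $H^1(\mathcal N)=0$; this moves $(d,g)\mapsto(d+3,g+\delta-1)$. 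The constant $\frac{2}{15\sqrt5}$ is not the output of an optimization over determinantal twists: it is the threshold at which the genus intervals attached to consecutive base curves (parameters $t$ and $t+2$) stop overlapping, i.e.\ where $L(t+2)\le R(t)$ fails. Your plan omits the interpolation mechanism entirely, so even if every cohomological claim about Eagon--Northcott resolutions were verified, it would prove the statement only for special pairs $(d,g)$.
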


Again, we only focus on the asymptotic behavior. The coefficients of $d\sqrt{d}$ might be improved by refining our techniques, but it seems hard to obtain a better order than $d\sqrt{d}$. 

Throughout the paper, we work over the complex number field. A degree $d$
genus $g$ curve in a projective space means a 1-dimensional subscheme that has $dm + 1-g$ as its Hilbert polynomial. Most of the time we will only consider smooth irreducible and nondegenerate curves.

{\bf Acknowledgements.} I am grateful to Professor Joe Harris, who first told me about this question and made many useful suggestions during the preparation of this work.

\section{The Hilbert scheme of curves in $\mathbb P^{3}$}
In this section, we will verify Theorem \ref{r=3}. Let us briefly describe the outline of the proof. Fix $d$ and let $g$ vary. On one hand, we construct some components of the Hilbert scheme with the expected dimension $4d$ when $g$ is relatively small. On the other hand, if $g$ is quite large, the curve must lie on a low degree surface. We can estimate the dimension of the deformation of the curve on that surface, which would provide a better bound than $4d$.

\subsection{Determinantal curves in $\mathbb P^{3}$}

As mentioned before, we want to construct some components of the Hilbert scheme
that have $4d$ as their dimension.

For a curve $C$ in $\mathbb P^{3}$, let
$\mathcal I_{C} = \mathcal I_{C/\mathbb P^{3}}$ denote the ideal sheaf
of $C$, and let $\mathcal N_{C}$ be the normal sheaf $\mathcal
N_{C/\mathbb P^{3}}$. Firstly, let us look at an example constructed in \cite{E}.

Consider a curve $C$ whose ideal sheaf has resolution as follows,
\begin{equation}
\label{d3}
0\rightarrow \mathcal O^{\oplus s}_{\mathbb P^{3}}(-s-1)\rightarrow \mathcal O^{\oplus (s+1)}_{\mathbb P^{3}}(-s)\rightarrow \mathcal I_{C} \rightarrow 0.
\end{equation}

It is easy to derive the determinantal model for such a curve from this resolution. Pick an $s \times (s+1)$ matrix $A$ whose entries
are general linear forms. Then the ideal sheaf of the curve defined by the determinants of all the $s\times s$ minors of $A$ has the above resolution.
Tensor the exact sequence (\ref{d3}) with $\mathcal O_{\mathbb P^{3}}(k)$, and we get $h^{1}(\mathcal I_{C}(k)) = 0$ for any $k$.
Hence, $C$ is projectively normal. We can also get the Hilbert polynomial of $C$. Actually, when $k$ is large enough, we have
\begin{eqnarray*}
h^{0}(\mathcal I_{C}(k)) & = & (s+1)\cdotp h^{0}(\mathcal O_{\mathbb P^{3}}(k-s)) - s\cdotp h^{0}(\mathcal O_{\mathbb P^{3}}(k-s-1)) \\
                                        & = & \frac{1}{6}(k-s+2)(k-s+1)(k+2s+3).
\end{eqnarray*}

The Hilbert polynomial of $C$ equals
$$h^{0}(\mathcal O_{\mathbb P^{3}}(k))-h^{0}(\mathcal I_{C}(k)) = \frac{1}{2}(s^{2}+s)k - \frac{1}{6}(2s^{3}-3s^{2}-5s).$$

So immediately we obtain the degree and genus of $C$,
$$ d=\frac{1}{2}s(s+1), $$
$$ g = 1+\frac{1}{6}(2s^{3}-3s^{2}-5s). $$

If we take all possible linear forms as entries of $A$, by the
above construction, we get an irreducible component $U$ in the Hilbert scheme whose general point $[C]$ corresponds to a smooth irreducible and nondegenerate curve. By counting parameters, the dimension of $U$ is
$$ 4s(s+1)-1-\mbox{dim PGL}_{s}-\mbox{dim PGL}_{s+1} = 2s^{2}+2s = 4d. $$

Actually $U$ is smooth at $[C]$, due to the fact that $H^{1}(\mathcal N_{C})=0$.
By \cite[Remark 2.2.6]{Kl}, we have,
\begin{eqnarray}
\label{H0}
H^{0}(\mathcal N_{C}) = \mbox{Ext}^{1}(\mathcal I_{C}, \mathcal I_{C}), \nonumber \\
\label{H1}
H^{1}(\mathcal N_{C}) = \mbox{Ext}^{2}(\mathcal I_{C}, \mathcal I_{C}).
\end{eqnarray}
Apply the functor Hom$(-, \mathcal I_{C})$ to the exact sequence (\ref{d3}). We get a long exact sequence
\begin{eqnarray}
\label{LE}
0 \rightarrow \mbox{Hom}(\mathcal I_{C}, \mathcal I_{C}) \rightarrow \mbox{Hom}(\mathcal O^{\oplus (s+1)}_{\mathbb P^{3}}(-s), \mathcal I_{C})  \rightarrow   \mbox{Hom}(\mathcal O^{\oplus s}_{\mathbb P^{3}}(-s-1), \mathcal I_{C}) \nonumber \\
   \hookrightarrow \mbox{Ext}^{1}(\mathcal I_{C}, \mathcal I_{C}) \rightarrow \mbox{Ext}^{1}(\mathcal O^{\oplus (s+1)}_{\mathbb P^{3}}(-s), \mathcal I_{C})  \rightarrow  \mbox{Ext}^{1}(\mathcal O^{\oplus s}_{\mathbb P^{3}}(-s-1), \mathcal I_{C}) \nonumber \\
   \hookrightarrow \mbox{Ext}^{2}(\mathcal I_{C}, \mathcal I_{C}) \rightarrow  \mbox{Ext}^{2}(\mathcal O^{\oplus (s+1)}_{\mathbb P^{3}}(-s), \mathcal I_{C}) \rightarrow \cdots \ \ \ \ \ \ \ \ \ \ \ \ \ \ \ \ \ \ \ \ \ \ \ \ \ \ \
\end{eqnarray}
Note that $ \mbox{Ext}^{2}(\mathcal O_{\mathbb P^{3}}(-s), \mathcal I_{C}) = H^{2}(\mathcal I_{C}(s)). $
Twist (\ref{d3}) by $\mathcal O_{\mathbb P^{3}}(s)$, and we get $H^{2}(\mathcal I_{C}(s))=0$.
Moreover, $\mbox{Ext}^{1}(\mathcal O_{\mathbb P^{3}}(-s-1), \mathcal I_{C}) = H^{1}(\mathcal I_{C}(s+1)) = 0$, since $C$ is
projectively normal. Then from (\ref{H1}) and (\ref{LE}), it follows that $H^{1}(\mathcal N_{C}) = 0$ as we expect.

Let us look at the values of $d$ and $g$ obtained above. One observation is that
$g^{2}\sim \frac{8}{9} d^{3}$ asymptotically. It implies that the
ratio $\frac{g^{2}}{d^{3}}$ might be an important index. More precisely, we want to find a number
$\lambda$ such that if $\frac{g^{2}}{d^{3}}\leq \lambda$
asymptotically, then there exists a component of the Hilbert scheme whose dimension is close to $4d$. In this case, the lower bound $4d$ is still good. Actually, we will show that $\lambda = 1$ is almost the best.

Continue to consider determinantal curves. Modify the entries of the matrix $A$ by using degree $t$ homogeneous polynomials instead of linear forms.
Then the ideal sheaf of $C$ has resolution 
\begin{equation}
\label{IIE}
0\rightarrow \mathcal O^{\oplus s}_{\mathbb P^{3}}(-t-ts)\rightarrow \mathcal O^{\oplus (s+1)}_{\mathbb P^{3}}(-ts)\rightarrow \mathcal I_{C} \rightarrow 0.
\end{equation}
Compute the Hilbert polynomial as before. We obtain the degree and genus of $C$ as follow, 
$$ d = \frac{1}{2}s(s+1)t^{2}, $$
$$ g = 1 + \frac{1}{6}s(s+1)(2s+1)t^{3}-s(1+s)t^{2}. $$
By counting parameters, the dimension of the component of such curves is
\begin{eqnarray*}
4s(s+1)\binom{t+3}{3}-1-\mbox{dim PGL}_{s}-\mbox{dim PGL}_{s+1} \\
= \frac{1}{6}s(s+1)(t^{3}+6t^{2}+11t-6). 
\end{eqnarray*}
We denote the above value by $l$. 

Note that the ratio $\frac{g^{2}}{d^{3}}$ satisfies the inequality
$$ \frac{g^{2}}{d^{3}} < \frac{2}{9}(4+\frac{1}{s^{2}+s}). $$
So asymptotically $\frac{g^{2}}{d^{3}}\leq 1$. Moreover, the ratio tends to 1 if and only if $s=1$, that is, when $C$ is a complete intersection of two degree $t$ surfaces. 

Another interesting fact is that when $t\leq 3$ we always have $l=4d$ for any $s$. But as $t$ increases, $l$ will get much larger than $4d$.
We have already discussed the case $t=1$. Now we take a look at $t=2$ and $3$. 

If $t=2$, we have
$$ d = 2s(s+1), $$
$$ g = 1 + \frac{8}{3}(s-1)s(s+1). $$
Asymptotically $\frac{g^{2}}{d^{3}}$ goes to $\frac{8}{9}$. 

If $t=3$, we have
$$ d = \frac{9}{2}s(s+1), $$
$$ g = 1 + \frac{9}{2}s(s+1)(2s-1). $$
Asymptotically $\frac{g^{2}}{d^{3}}$ goes to $\frac{8}{9}$ as well, still less than 1. 

Next, we further modify the matrix $A$ by allowing the entries at different rows to have
different degrees. Suppose $A=(F_{ij})$, $1\leq i \leq s, 1\leq j \leq s+1$, and the degree of $F_{ij}$ is $k_{i}$.
Let $t=\sum_{i=1}^{s}k_{i}$. Then the ideal sheaf of $C$ has resolution
\begin{equation}
\label{IIIE}
0\rightarrow \bigoplus_{i=1}^{s}\mathcal O_{\mathbb P^{3}}(-t-k_{i})\rightarrow \mathcal O^{\oplus (s+1)}_{\mathbb P^{3}}(-t)\rightarrow \mathcal I_{C} \rightarrow 0.
\end{equation}
We can obtain the degree and genus of $C$,
$$ d = \frac{1}{2}\big(t^{2}+\sum_{i=1}^{s}k_{i}^{2}\big), $$
$$ g = 1 + \frac{1}{6}\Big(2t^{3}-6t^{2}+3\big(\sum_{i=1}^{s}k_{i}^{2}\big)t+\sum_{i=1}^{s}\big(k_{i}^{3}-6k_{i}^{2}\big)\Big). $$
In this case, the dimension estimate by counting parameters depends on how many $k_{i}$'s may have the same value.
However, we are more interested in if $\frac{g^{2}}{d^{3}}$ can approach a better upper bound, e.g., much larger than 1.

Let $u=\sum_{i=1}^{s}k_{i}^{2}$, then $\sum_{i=1}^{s}k_{i}^{3}\leq tu$. Hence, we have
$$ g < \frac{1}{6}(2t^{2}+4ut), $$
$$ \frac{g^{2}}{d^{3}} <  \frac{8}{9}\cdotp\frac{(1+2\alpha)^{2}}{(1+\alpha)^{3}}, $$
where $\alpha = \frac{u}{t^{2}}.$
When $\alpha = \frac{1}{2}$, the right hand side of the last inequality has the maximum $\frac{256}{243}$, which is still close to 1.

The above examples partially explains why we separate the case $g^{2} < d^{3}$ in Theorem \ref{r=3}. The reader may also wonder why we do not construct other examples in addition to determinantal curves to see if we can get
$\frac{g^{2}}{d^{3}}$ much larger than 1 and at the
same time keep the dimension of the component relatively low. In
fact, this is impossible. Next subsection explains the different
situation when $g^{2}\geq d^{3}$.

\subsection{Curves on a surface in $\mathbb P^{3}$}
In this subsection, we want to show if $g$ is much larger than $d$, then a curve must be contained in a relatively low degree surface in $\mathbb P^{3}$. Moreover, we can estimate the deformation of the curve on that surface, which provides a proof for the second part of Theorem \ref{r=3}.

Firstly, we cite a result originally mentioned by Halphen and proved later by Gruson and Peskine \cite{GP}.

\begin{theorem}
\label{s3}
Let $C$ be a connected smooth curve of degree $d$ and genus $g$ in $\mathbb P^{3}$. $s$ is a positive integer such that
$s(s-1)<d$. If $g$ satisfies
\begin{equation}
\label{g3}
g > \frac{d}{2}(s+\frac{d}{s}-4)-\frac{r(s-r)(s-1)}{2s},
\end{equation}
where $0\leq r < s, d+r\equiv 0$ (mod $s$),
then $C$ must lie on a surface of degree less than $s$.
\end{theorem}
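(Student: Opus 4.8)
The plan is to reduce to the study of a general plane section and bring in the uniform position principle. First I would cut $C$ by a general plane $H\cong\mathbb P^{2}$ to get the finite scheme $\Gamma=C\cap H$ of $d$ points, which by the uniform position principle lie in uniform position. Write $h_{\Gamma}(n)$ for the Hilbert function of $\Gamma$ in $\mathbb P^{2}$ and $\delta(n)=h_{\Gamma}(n)-h_{\Gamma}(n-1)$ for its first difference. Two classical inputs drive everything. The first is a Castelnuovo-type genus bound
$$ g\ \le\ \sum_{n\ge 1}\bigl(d-h_{\Gamma}(n)\bigr)\ =\ \sum_{n\ge 2}(n-1)\,\delta(n), $$
which comes from the restriction sequences $0\to\mathcal O_{C}(n-1)\to\mathcal O_{C}(n)\to\mathcal O_{\Gamma}\to 0$ together with the identification of $h_{\Gamma}(n)$ with the rank of the restriction map $H^{0}(\mathcal O_{C}(n))\to H^{0}(\mathcal O_{\Gamma})$. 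The second is the structural consequence of uniform position: $\delta$ increases maximally, $\delta(n)=n+1$, up to the initial degree $\sigma$ at which $\Gamma$ first lies on a plane curve, and thereafter the numerical character of $\Gamma$ is of decreasing type and, because $C$ is integral, is connected (has no gaps in the sense of Gruson--Peskine), so the shape of $h_{\Gamma}$ is tightly constrained.

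The heart of the matter is to prove the contrapositive: if $C$ lies on no surface of degree $\le s-1$, then $g$ cannot exceed the asserted bound. For this I would first show that $\Gamma$ then lies on no plane curve of degree $\le s-1$, i.e.\ $h^{0}(\mathcal I_{\Gamma}(n))=0$ and hence $\delta(n)=n+1$ for every $n\le s-1$. This is the step where the hypothesis $s(s-1)<d$ is indispensable: it is exactly the numerical threshold of the lifting lemma of Laudal and Gruson--Peskine, which guarantees that for a general plane section of an integral space curve a plane curve of degree $k=s-1$ through $\Gamma$ lifts to a surface of degree $s-1$ through $C$. Contraposing the lifting lemma forces the minimal curve degree of $\Gamma$ to satisfy $\sigma\ge s$.

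Granting $\delta(n)=n+1$ for $n\le s-1$, so that $\delta(s-1)=s$, the admissible decreasing connected characters with total mass $\sum_{n}\delta(n)=d$ form a finite family, and I would maximize the genus bound $\sum_{n\ge 2}(n-1)\delta(n)$ over it. The extremal configuration is the one attached to a curve lying on a surface of degree exactly $s$, namely $\delta(n)=\min(n+1,s)$ held at the value $s$ for as long as the remaining mass allows before dropping off. Carrying out this elementary optimization, and tracking the single leftover term dictated by writing $d+r\equiv 0\pmod s$ with $0\le r<s$, reproduces the expression $\tfrac{d}{2}\bigl(s+\tfrac{d}{s}-4\bigr)-\tfrac{r(s-r)(s-1)}{2s}$; the correction $\tfrac{r(s-r)(s-1)}{2s}$ is precisely the arithmetic defect coming from the non-divisibility of $d$ by $s$.

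I expect the lifting lemma to be the main obstacle. The Castelnuovo bound and the decreasing-connected behavior of the character are standard once uniform position is in hand, and the final step is a purely combinatorial extremal computation; but passing from an accidental low-degree plane curve through the points $\Gamma$ back to a genuine surface through the space curve $C$ is delicate, and it is exactly here that a hypothesis of the shape $s(s-1)<d$ cannot be dispensed with. Without such a bound the points of $\Gamma$ could lie on a plane curve that fails to be the trace of any surface containing $C$, the forcing $\sigma\ge s$ would break down, and the genus estimate would collapse.
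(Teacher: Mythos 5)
The paper states this theorem as a quoted result of Halphen, proved by Gruson and Peskine, and gives no proof of its own, so there is no internal argument to measure you against; the benchmark is the proof in \cite{GP} (see also \cite[Section 3]{H}), and your outline is essentially that proof. The three ingredients you isolate are the right ones: the Castelnuovo-type inequality $g\le\sum_{n\ge 1}(d-h_{\Gamma}(n))=\sum_{n\ge 2}(n-1)\delta(n)$ for a general plane section $\Gamma$, the structure of the $h$-vector of points in uniform position (maximal growth $\delta(n)=n+1$ below the initial degree $\sigma$, then a tail of decreasing, connected type), and a lifting statement forcing $\sigma\ge s$ from the absence of a surface of degree $<s$ through $C$. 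You also locate correctly where $s(s-1)<d$ enters: taking $k=s-1$, one has $d>s(s-1)=(s-1)^{2}+(s-1)\ge (s-1)^{2}+1$ for $s\ge 2$, which is the numerical threshold in Laudal's lemma, and since a plane curve of degree $k<s-1$ through $\Gamma$ can be completed to one of degree $s-1$, lifting at the single level $s-1$ suffices. Since $C$ is smooth and connected, hence integral, the uniform position principle and the integrality hypothesis of the lifting lemma both apply.

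Two steps remain genuinely unexecuted, and you should be aware of what each one costs. First, the lifting lemma is invoked as a black box; it is the hardest part of \cite{GP}, though treating it as a citation is defensible given that the paper treats the entire theorem that way. Second, the extremal computation is only described, and it is precisely the connectedness (decreasing type) of the character that does the work there: non-increasing tails alone would permit a long plateau at a small value of $\delta$, pushing mass far to the right and producing genus of order $d^{2}/2$ rather than $d^{2}/2s$. Decreasing type forces the tail to be a plateau at height $\sigma$ followed by a strict descent, the descent in unit steps being extremal, and one must then check that among all admissible $\sigma\ge s$ the maximum of $\sum(n-1)\delta(n)$ is attained at $\sigma=s$ (larger $\sigma$ locks more mass into the forced initial staircase and shortens the tail). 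Carrying this out with the residue $r$ defined by $d+r\equiv 0\pmod{s}$ yields the correction term $\frac{r(s-r)(s-1)}{2s}$. None of this is a flaw in the strategy; these are the places where the sketch must be filled in to become a proof.
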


Note that if $s\sim \sqrt{d}$, then the right hand side of (\ref{g3}) $\sim \sqrt{d^{3}}$. Hence, Theorem \ref{s3} can help us deal with the case $g^{2}>d^{3}$.

Since we only want asymptotic results, Theorem \ref{s3} can be slightly modified
for our convenience.

\begin{proposition}
\label{s3m}
Let $C$ be a connected smooth curve of degree $d$ and genus $g$ in $\mathbb P^{3}$. $s$ is a positive integer such that
$s(s+1)< d$. If $g$ satisfies
\begin{equation}
\label{g3m}
g > \frac{d}{2}(s+\frac{d}{s+1}-3),
\end{equation}
then $C$ must lie on a surface of degree $k \leq s$.
\end{proposition}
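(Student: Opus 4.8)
The plan is to deduce Proposition \ref{s3m} directly from Theorem \ref{s3} by applying the latter with its integer parameter replaced by $s+1$. The key observation is that ``$C$ lies on a surface of degree $k \le s$'' is exactly the statement ``$C$ lies on a surface of degree less than $s+1$'', so the conclusion we want is precisely the conclusion of Theorem \ref{s3} for the parameter value $s+1$.

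First I would match the numerical hypotheses. Running Theorem \ref{s3} with parameter $s+1$ requires $(s+1)s < d$, which is exactly the assumption $s(s+1) < d$ of Proposition \ref{s3m}. Next I would substitute $s+1$ for $s$ in the bound (\ref{g3}). Letting $r$ denote the residue with $0 \le r < s+1$ and $d + r \equiv 0 \pmod{s+1}$, the right-hand side of (\ref{g3}) becomes
$$ \frac{d}{2}\Big(s + \frac{d}{s+1} - 3\Big) - \frac{r(s+1-r)s}{2(s+1)}. $$
Thus Theorem \ref{s3} already forces $C$ onto a surface of degree at most $s$ whenever $g$ exceeds this quantity.

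It then remains to compare this sharp threshold with the simplified bound (\ref{g3m}). Since $0 \le r < s+1$ we have $r \ge 0$ and $s+1-r > 0$, while $s \ge 1$; hence the correction term $\frac{r(s+1-r)s}{2(s+1)}$ is nonnegative. The threshold in (\ref{g3m}) is obtained from the sharp one by discarding this nonnegative term, so it is no smaller. Requiring $g > \frac{d}{2}(s + \frac{d}{s+1} - 3)$ is therefore a stronger hypothesis that in particular forces $g$ to exceed the sharp threshold of Theorem \ref{s3}, and the desired conclusion follows.

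I do not anticipate a genuine obstacle: the argument is a reindexing together with the elementary remark that dropping a nonnegative term weakens the hypothesis. The only point needing a little care is to confirm that $s+1$, rather than $s$, is the correct parameter to feed into Theorem \ref{s3}, a choice dictated by matching the target conclusion ``degree $\le s$'' against Halphen's ``degree less than $s+1$''.
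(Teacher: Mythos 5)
Your proposal is correct and is precisely the "slight modification" the paper has in mind: it gives no explicit proof, but the intended derivation is exactly your reindexing of Theorem \ref{s3} at the parameter value $s+1$ (so that $s(s-1)<d$ becomes $s(s+1)<d$ and ``degree less than $s+1$'' becomes ``degree $\le s$''), followed by discarding the nonnegative correction term $\frac{r(s+1-r)s}{2(s+1)}$ to strengthen the hypothesis on $g$. Nothing is missing.
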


For fixed $d$ and $g$ in the range $g^{2}\geq d^{3}$, consider the smallest positive integer $s$ satisfying $s(s+1)< d$ and the inequality (\ref{g3m}). Then there exists a surface $S$ of degree $k\leq s$ such that $S$ contains $C$. Let $\mathcal H_{d,g}(S)$ be the Hilbert scheme parameterizing
degree $d$ and genus $g$ curves on $S$. $\mathcal H_{d,g}(S)$ can
be viewed as a subscheme of $\mathcal H_{d,g,3}$. We want to estimate $\mbox{dim}_{[C]}\mathcal H_{d,g}(S)$.

If $S$ is smooth, then $\chi(\mathcal N_{C/S})$ provides a lower bound for
$\mbox{dim}_{[C]}\mathcal H_{d,g}(S)$. We have the exact sequence
\begin{equation}
\label{nb}
 0\rightarrow \mathcal N_{C/S}\rightarrow \mathcal N_{C/\mathbb P^{3}}\rightarrow \mathcal N_{S/\mathbb P^{3}}\otimes \mathcal O_{C} \rightarrow 0.
\end{equation}
By adjunction formula, $\mathcal N_{S/\mathbb P^{3}}\otimes\mathcal O_{C} = \mathcal O_{C}(k)$.
Then we can compute $\chi(\mathcal N_{C/S})$ by the exact sequence (\ref{nb}) and Riemann-Roch,
\begin{eqnarray}
\mathcal X(\mathcal N_{C/S}) &=& \mathcal X(\mathcal N_{C/\mathbb P^{3}}) - \mathcal X(\mathcal N_{S/\mathbb P^{3}}\otimes \mathcal O_{C})\nonumber \\
                                                     &=& 4d - \mathcal X(\mathcal O_{C}(k)) \nonumber \\
                                                     &=& 4d+g-1-kd \nonumber \\
                                                     &\geq & 4d+g-1-sd. \nonumber
\end{eqnarray}
So we have
\begin{equation*}
\mbox{dim}_{[C]}\mathcal H_{d,g,3}\geq \mbox{dim}_{[C]}\mathcal H_{d,g}(S)\geq 4d+g-1-sd.
\end{equation*}
Therefore, we get a lower bound for the dimension of $\mathcal H_{d,g,3}$,
\begin{equation}
\label{se3}
l_{d,g,3}\geq 4d+g-1-sd.
\end{equation}

The advantage of (\ref{se3}) is because in the range $g^{2}\geq d^{3}$, as $g$ increases, $s$ decreases, and
 $4d+g-1-sd$ is more dominated by $g$. For instance, if we fix $d$ and let $g$ approach the Castelnuovo bound $\pi(d,3)$, then the dimension of $\mathcal H_{d,g,3}$ tends to $g$. But at this moment $s$ is very small. Therefore, the estimate (\ref{se3}) does not lose much information from the asymptotic viewpoint.

Now we can finish the proof of Theorem \ref{r=3} easily.

\begin{proof}
$4d$ is the classical lower bound for any $d,g$. Moreover, in the range $g^{2}\geq d^{3}$,
the smallest integer $s$ satisfying $s(s+1)< d$ and $g > \frac{d}{2}(s+\frac{d}{s+1}-3)$ is given
by $s = \mu (d,g)$. Apply the lower bound $4d+g-1-sd$ obtained in (\ref{se3}). It then completes the proof.
\end{proof}

In the above argument, there is one gap we need to fix, that is, when the surface $S$ is singular and $C$ passes through singular points of $S$. In that case we cannot simply apply cohomology to estimate the dimension of the deformation of
$C$ on $S$. Instead, we have to use Ext groups. Before doing that,
we will prove a simple result, which shows that the situation is not very bad even
if $S$ is singular.

\begin{lemma}
\label{fs}
Let $S_{sing}$ denote the singular locus of a surface $S$. Under the assumption of Proposition \ref{s3m}, if $C\cap S_{sing}$ is not empty, then it is 0-dimensional.
\end{lemma}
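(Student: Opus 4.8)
The statement will follow once I show that $C$ is not contained in the singular locus $S_{sing}$. Indeed, $C$ is smooth and irreducible of dimension one, so any closed subset of $C$ is either $0$-dimensional or all of $C$; thus if $C\cap S_{sing}$ fails to be $0$-dimensional it must equal $C$, forcing $C\subseteq S_{sing}$. To rule this out I would take $S$ to be a surface of \emph{minimal} degree $k\le s$ containing $C$, whose existence is guaranteed by Proposition \ref{s3m}. Since $C$ is irreducible, this minimality makes $S$ irreducible as well: if $S$ were reducible then $C$, lying in a single component of smaller degree, would already lie on a surface of degree less than $k$, a contradiction.

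Next I would describe $S_{sing}$ concretely. Writing $S=V(F)$ with $F$ irreducible of degree $k$ and working over $\mathbb C$, the Euler relation $kF=\sum_{i}x_{i}\,\partial_{i}F$ shows that the singular locus is the common zero scheme of the four partial derivatives, $S_{sing}=V(\partial_{0}F,\partial_{1}F,\partial_{2}F,\partial_{3}F)$, each $\partial_{i}F$ being a form of degree $k-1$. If $k=1$ then $S$ is a plane, hence smooth, so $S_{sing}=\emptyset$ and the hypothesis $C\cap S_{sing}\neq\emptyset$ is vacuous; I may therefore assume $k\ge 2$.

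The heart of the argument is a short minimality contradiction. Suppose $C\subseteq S_{sing}$. Because $F$ is a nonconstant form, not all of its partials vanish identically, so there is an index $i_{0}$ with $\partial_{i_{0}}F\not\equiv 0$, a form of degree $k-1\ge 1$. Then $C\subseteq V(\partial_{i_{0}}F)$ exhibits $C$ on a surface of degree $k-1<k$, contradicting the minimality of $k$. Hence $C\not\subseteq S_{sing}$, so $C\cap S_{sing}$ is at most $0$-dimensional, and combined with the assumption that it is nonempty this proves the lemma.

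The one point demanding care, and the only place where the hypotheses really enter, is the choice of $S$: the assertion is plainly false for an arbitrary surface of degree $\le s$ through $C$, since one can always place $C$ inside the singular locus of a suitable reducible or highly singular surface. The argument therefore depends essentially on selecting $S$ of minimal degree, which simultaneously supplies irreducibility and the lower-degree surface needed for the contradiction. A more quantitative route is also available: if $C\subseteq S_{sing}$ then $C$ is a component of the one-dimensional part of $S_{sing}\subseteq V(\partial_{i}F)\cap V(\partial_{j}F)$, whose degree is at most $(k-1)^{2}\le(s-1)^{2}<d$ using $s(s+1)<d$, contradicting $\deg C=d$; but this variant must separately dispose of the case in which every pair of partials shares a common factor, which is precisely the annoyance the minimality argument sidesteps.
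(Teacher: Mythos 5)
Your proof is correct, but it runs along a genuinely different line from the paper's. The paper fixes an (implicitly reduced and irreducible) surface $S=V(F)$ of degree $k\leq s$ through $C$ and argues by B\'ezout: the one-dimensional part of $S_{sing}$ lies in $V(F)\cap V(\partial_{i}F)$, hence has degree at most $k(k-1)\leq s(s-1)<d$, so the irreducible degree-$d$ curve $C$ cannot be a component of it. You instead normalize $S$ to be of \emph{minimal} degree $k$ among surfaces containing $C$ (which also forces $F$ irreducible), and derive the contradiction from $C\subseteq S_{sing}\subseteq V(\partial_{i_{0}}F)$ with $\deg \partial_{i_{0}}F=k-1<k$. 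Your route is B\'ezout-free and, notably, does not use the numerical hypothesis $s(s+1)<d$ at all, so it proves a slightly stronger statement; the cost is that you must re-choose $S$, which is harmless here since the downstream estimate $4d+g-1-kd\geq 4d+g-1-sd$ only needs $k\leq s$, and $V(F)$ remains a hypersurface so Proposition \ref{cx} still applies. Your closing remark is also well taken: the lemma as literally stated fails for an arbitrary (e.g., non-reduced) surface of degree $\leq s$ through $C$, and both arguments tacitly require a good choice of $S$ -- the paper needs $F$ reduced and irreducible for the bound $k(k-1)$ on $\deg S_{sing}$ to be valid, which your minimality hypothesis supplies automatically.
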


\begin{proof}
If the dimension of $S_{sing}$ is 0, then the statement is trivial. Otherwise the dimension of $S_{sing}$ is 1. By B\'{e}zout,
the degree of $S_{sing}$ is at most $k(k-1)\leq s(s-1)< d$. Hence, $C$ cannot be contained in $S_{sing}$.
\end{proof}

By Lemma \ref{fs}, we can apply the following result from \cite[Lemma 2.13, Theorem 2.15]{Ko}.

\begin{proposition}
\label{K3}
Keep the above notation. If $C\cap S_{sing}$ is 0-dimensional, then $C\subset S$ is generically unobstructed and the
dimension of every irreducible component of $\mathcal H_{d,g}(S)$ at $[C]$ is at least
\begin{eqnarray}
\label{ext3}
\mathrm{dim \ Hom}_{C}(\mathcal I_{C/S}/\mathcal I_{C/S}^{2}, \mathcal O_{C}) - \mathrm{dim \ Ext}_{C}^{1}(\mathcal I_{C/S}/\mathcal I_{C/S}^{2}, \mathcal O_{C}).
\end{eqnarray}
\end{proposition}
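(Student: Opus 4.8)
The bound displayed in (\ref{ext3}) has exactly the shape $\dim T^{1}-\dim T^{2}$ produced by any tangent--obstruction theory, so the plan is to exhibit such a theory for the embedded deformations of $C$ inside the \emph{fixed} surface $S$ and to read off $T^{1}$ and $T^{2}$ from the conormal sheaf $\mathcal{I}_{C/S}/\mathcal{I}_{C/S}^{2}$. First I would invoke the general principle: if the local Hilbert functor of subschemes of $S$ at $[C]$ is governed by a tangent space $T^{1}$ and an obstruction space $T^{2}$, then its versal hull, and hence $\mathcal{H}_{d,g}(S)$ at $[C]$, is cut out in a smooth germ of dimension $\dim T^{1}$ by at most $\dim T^{2}$ equations; consequently every irreducible component of $\mathcal{H}_{d,g}(S)$ through $[C]$ has dimension at least $\dim T^{1}-\dim T^{2}$. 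The entire problem is therefore to identify $T^{1}$ and $T^{2}$ correctly in the case where $S$ is permitted to be singular along $C$.

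The tangent space is formal: a first-order embedded deformation of $C$ in $S$ over the dual numbers is an $\mathcal{O}_{S}$-homomorphism $\mathcal{I}_{C/S}\rightarrow\mathcal{O}_{C}$, and since $\mathcal{I}_{C/S}^{2}$ maps into $\mathcal{I}_{C/S}\cdot\mathcal{O}_{C}=0$, every such map factors through the conormal sheaf, giving $T^{1}=\mathrm{Hom}_{C}(\mathcal{I}_{C/S}/\mathcal{I}_{C/S}^{2},\mathcal{O}_{C})$. The obstruction space is the delicate point. The genuine obstructions to embedded deformations in a fixed ambient scheme live in $\mathrm{Ext}^{1}_{\mathcal{O}_{S}}(\mathcal{I}_{C/S},\mathcal{O}_{C})$, so one must compare this group with the group $\mathrm{Ext}^{1}_{C}(\mathcal{I}_{C/S}/\mathcal{I}_{C/S}^{2},\mathcal{O}_{C})$ appearing in the statement. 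Both are analyzed through the local-to-global spectral sequence, whose lowest term in each case is $H^{1}(C,\mathcal{N}_{C/S})$ with $\mathcal{N}_{C/S}=\mathcal{H}om_{C}(\mathcal{I}_{C/S}/\mathcal{I}_{C/S}^{2},\mathcal{O}_{C})$; the two differ only through the local sheaves $\mathcal{E}xt^{1}$, which are supported precisely on the locus where $\mathcal{I}_{C/S}/\mathcal{I}_{C/S}^{2}$ fails to be locally free, i.e. where $C$ is not a local complete intersection in $S$.

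This is where Lemma \ref{fs} enters. At every point of $C$ at which $S$ is smooth, $C$ is a Cartier divisor in the smooth surface $S$, hence a local complete intersection with $\mathcal{I}_{C/S}/\mathcal{I}_{C/S}^{2}$ an invertible $\mathcal{O}_{C}$-module; thus $C$ is lci in $S$, and the conormal sheaf is locally free, away from the set $C\cap S_{sing}$, which by Lemma \ref{fs} is $0$-dimensional. Consequently $C\subset S$ is lci at the generic point of $C$, so it is generically unobstructed, and the discrepancy between $\mathrm{Ext}^{1}_{\mathcal{O}_{S}}(\mathcal{I}_{C/S},\mathcal{O}_{C})$ and $\mathrm{Ext}^{1}_{C}(\mathcal{I}_{C/S}/\mathcal{I}_{C/S}^{2},\mathcal{O}_{C})$ is concentrated at the finitely many points of $C\cap S_{sing}$. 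These are exactly the hypotheses under which \cite[Lemma 2.13, Theorem 2.15]{Ko} guarantees that the obstruction space embeds into $\mathrm{Ext}^{1}_{C}(\mathcal{I}_{C/S}/\mathcal{I}_{C/S}^{2},\mathcal{O}_{C})$ and that every component of $\mathcal{H}_{d,g}(S)$ at $[C]$ has dimension at least (\ref{ext3}); applying that result with the $0$-dimensionality supplied by Lemma \ref{fs} finishes the proof. I expect the real obstacle to be this final comparison at the singular points of $S$ --- controlling the local $\mathcal{E}xt$ contributions along $C\cap S_{sing}$ so that no obstruction beyond $\mathrm{Ext}^{1}_{C}$ of the conormal sheaf survives --- which is precisely the technical content carried by Kollár's lemmas.
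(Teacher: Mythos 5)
Your proposal is correct and follows essentially the same route as the paper: the paper gives no independent proof of this proposition but simply quotes it as \cite[Lemma 2.13, Theorem 2.15]{Ko}, after using Lemma \ref{fs} to guarantee that $C\cap S_{sing}$ is at most $0$-dimensional so that $C\subset S$ is a local complete intersection (hence unobstructed) away from finitely many points. Your added scaffolding --- identifying $T^{1}$ with $\mathrm{Hom}_{C}(\mathcal I_{C/S}/\mathcal I_{C/S}^{2},\mathcal O_{C})$ and isolating the obstruction comparison at the points of $C\cap S_{sing}$ --- is exactly the content carried by Koll\'ar's cited lemmas, so nothing is missing.
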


If $S$ is smooth, the value of (\ref{ext3}) is just $\mathcal X(\mathcal N_{C/S})$. When $S$ is singular, we need to verify some exact sequences of K\"{a}hler differentials. We will do it in a more general setting since the results can be applied to many other cases.

\begin{proposition}
\label{Kn}
Suppose $C$ is a smooth connected curve, $X$ is an $(n-k)$-dimensional local complete intersection, and $C\subset X\subset \mathbb P^{n}, \ n\geq 3, 1\leq k\leq n-2$. If $C\cap X_{sing}$ is 0-dimensional, we have the following exact sequences
\begin{eqnarray}
\label{cxn}
0\rightarrow\mathcal I_{C/X}/\mathcal I_{C/X}^{2}\xrightarrow{d}\Omega_{X}\otimes \mathcal O_{C}\rightarrow\Omega_{C}\rightarrow 0, \\
\label{cxpn}
0\rightarrow (\mathcal I_{X}/\mathcal I_{X}^{2})\otimes\mathcal O_{C}\xrightarrow{d} \Omega_{\mathbb P^{n}}\otimes\mathcal O_{C}\rightarrow \Omega_{X}\otimes \mathcal O_{C}\rightarrow  0.
\end{eqnarray}
\end{proposition}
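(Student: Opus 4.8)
The two displayed sequences are the conormal (second fundamental) exact sequences attached, respectively, to $X\subset\mathbb{P}^{n}$ (pulled back to $C$) and to $C\subset X$. For any closed immersion the conormal sequence is right-exact, so in both cases the entire content is the \emph{injectivity} of the left-hand map $d$. The plan is to prove this injectivity by a support argument: away from $X_{sing}$ the relevant ambient scheme is smooth, where the conormal sequence is locally split and hence stays exact after restriction to $C$; therefore the kernel of $d$ is supported on $C\cap X_{sing}$, which is $0$-dimensional by hypothesis. Whenever the source sheaf is torsion-free on the smooth integral curve $C$, such a kernel is forced to vanish.

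First I would prove (\ref{cxpn}). Tensoring the right-exact conormal sequence of $X$ in $\mathbb{P}^{n}$ with $\mathcal O_{C}$ gives right-exactness of (\ref{cxpn}) for free. Because $X$ is a local complete intersection, $\mathcal I_{X}/\mathcal I_{X}^{2}$ is locally free on $X$, so $(\mathcal I_{X}/\mathcal I_{X}^{2})\otimes\mathcal O_{C}$ is locally free, in particular torsion-free, on $C$. On $C\setminus X_{sing}$ the variety $X$ is smooth, so there the conormal sequence of $X$ is locally split exact and remains exact after $\otimes\,\mathcal O_{C}$; thus $d$ is injective off the finite set $C\cap X_{sing}$. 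Its kernel is then a torsion subsheaf of a torsion-free sheaf, hence $0$, which proves (\ref{cxpn}).

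For (\ref{cxn}) the direct argument is unavailable, and this is the main obstacle: $C$ need not be a local complete intersection in $X$ at a point of $C\cap X_{sing}$ (for instance, a ruling through the vertex of a quadric cone is not even Cartier), so $\mathcal I_{C/X}/\mathcal I_{C/X}^{2}$ may carry torsion and one cannot deduce injectivity from torsion-freeness. To circumvent this I would bootstrap from (\ref{cxpn}) together with two standard inputs: the conormal sequence $0\to\mathcal I_{C/\mathbb{P}^{n}}/\mathcal I_{C/\mathbb{P}^{n}}^{2}\xrightarrow{a}\Omega_{\mathbb{P}^{n}}\otimes\mathcal O_{C}\xrightarrow{b}\Omega_{C}\to 0$, which is exact (indeed locally split) since $C$ is smooth in the smooth variety $\mathbb{P}^{n}$; and the right-exact transitivity sequence $(\mathcal I_{X}/\mathcal I_{X}^{2})\otimes\mathcal O_{C}\xrightarrow{\alpha}\mathcal I_{C/\mathbb{P}^{n}}/\mathcal I_{C/\mathbb{P}^{n}}^{2}\xrightarrow{\beta}\mathcal I_{C/X}/\mathcal I_{C/X}^{2}\to 0$ of conormal sheaves for $C\subset X\subset\mathbb{P}^{n}$.

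The key compatibility is that $a\circ\alpha$ equals the left-hand map $d$ of (\ref{cxpn}), since both are induced by the universal derivation applied to the inclusion $\mathcal I_{X}\subset\mathcal I_{C/\mathbb{P}^{n}}$. Granting this, let $p\colon\Omega_{\mathbb{P}^{n}}\otimes\mathcal O_{C}\to\Omega_{X}\otimes\mathcal O_{C}$ and $q\colon\Omega_{X}\otimes\mathcal O_{C}\to\Omega_{C}$ be the canonical surjections, so that $q\circ p=b$. A short diagram chase (equivalently, the snake lemma) shows $\ker q=p(\ker b)=\operatorname{im}(p\circ a)$, while $\ker(p\circ a)=\operatorname{im}\alpha$, using that $a$ is injective together with the identity $\ker p=\operatorname{im}d=a(\operatorname{im}\alpha)$ furnished by (\ref{cxpn}) and the compatibility above. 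Hence $\ker q\cong\mathcal I_{C/\mathbb{P}^{n}}/\mathcal I_{C/\mathbb{P}^{n}}^{2}\,/\,\operatorname{im}\alpha\cong\mathcal I_{C/X}/\mathcal I_{C/X}^{2}$, the last isomorphism being right-exactness of the transitivity sequence. This is precisely the left-exactness of (\ref{cxn}), and the resulting injection is the natural map $d$ by the same compatibility, completing the proof. Notably the bootstrap needs neither torsion-freeness of $\mathcal I_{C/X}/\mathcal I_{C/X}^{2}$ nor injectivity of $\alpha$; it uses only the already-established (\ref{cxpn}), the smoothness of $C$ in $\mathbb{P}^{n}$, and the right-exactness of the transitivity sequence.
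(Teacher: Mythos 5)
Your proof is correct, and it takes a genuinely different route from the paper's, at least for the sequence (\ref{cxn}). The paper proves both injectivity statements by direct computation in local coordinates: for (\ref{cxpn}) it writes $h=\sum b_i f_i$ and uses the Jacobian criterion plus irreducibility of $C$ to force the $b_i$ to vanish on $C$ (which is exactly your torsion-freeness argument, unpacked in coordinates), and for (\ref{cxn}) it shows that if $dg$ lies in the span of the $df_i$ on $C$, then $g-\sum a_if_i$ vanishes on $C$ together with its differential and hence lies in $\mathcal I_{C/\mathbb P^n}^2$ by smoothness of $C$. Your treatment of (\ref{cxpn}) is thus essentially the paper's argument in sheaf-theoretic packaging, but your treatment of (\ref{cxn}) is different: instead of a second local computation you bootstrap it formally from (\ref{cxpn}), the locally split conormal sequence of the smooth curve $C\subset\mathbb P^n$, and the right-exact transitivity sequence of conormal sheaves, via a diagram chase. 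This buys you a cleaner, computation-free derivation of (\ref{cxn}) and correctly sidesteps the genuine issue that $\mathcal I_{C/X}/\mathcal I_{C/X}^2$ may have torsion (so the support argument alone would not suffice there); the cost is that your proof of (\ref{cxn}) inherits the hypothesis that $C\cap X_{sing}$ is $0$-dimensional through (\ref{cxpn}), whereas the paper's direct argument for (\ref{cxn}) uses only smoothness of $C$ and needs no hypothesis on $X_{sing}$. Both approaches are valid, and your identification of the key compatibility $a\circ\alpha=d$ is exactly what makes the chase go through.
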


Note that if $X$ is smooth, those results are well-known. When $X$ is singular, the above sequences are still exact except the left hand sides may
not be injective, cf. \cite[II 8]{Ha}.

\begin{proof}
It suffices to verify that the map to the middle term is always injective for each sequence. Since the question is local, we only need to work on a local affine chart $U$. Suppose $x_{1},\ldots,x_{n}$ are the local coordinates, and $f_{1},\ldots,f_{k}$ locally cut out $X$ in $U$. We have $\Omega_{X}(U) = \Omega_{\mathbb P^{n}}\otimes \mathcal O_{X}(U)/(df_{1},\ldots, df_{k})$.

Firstly, let us verify (\ref{cxn}). Pick an element $g\in \mathcal I_{C/X}(U)$. Suppose we have
$$dg=\sum_{j=1}^{n}\frac{\partial g}{\partial x_{j}}dx_{j}=0
\in \Omega_{X}\otimes \mathcal O_{C}(U).$$
There also exist $a_{1},\ldots,a_{k}\in \mathcal O_{C}(U)$ such that restricted on $C$,
$$\frac{\partial g}{\partial x_{j}}=\sum_{i=1}^{k}a_{i}\frac{\partial f_{i}}{\partial x_{j}}, \ 1\leq j\leq n. $$
It follows that $d(g - \sum_{i=1}^{k}a_{i}f_{i}) = 0$ on $C$. Since $C$ is smooth, the vanashing of $g - \sum_{i=1}^{k}a_{i}f_{i}$ and its differential
on $C$ tell us that $g - \sum_{i=1}^{k}a_{i}f_{i}\in \mathcal I_{C}^{2}(U)$,
which implies $g = g - \sum_{i=1}^{k}a_{i}f_{i} = 0$ as elements in $\mathcal I_{C/X}/\mathcal I_{C/X}^{2}(U)$.

Next, let us verify the exactness of (\ref{cxpn}). Take an element $h = \sum_{i=1}^{k}b_{i}f_{i}\in \mathcal I_{X}(U)$.
If $dh=0$ restricted on $C$, since $f_{1},\ldots,f_{k}$ vanash on $C$, we have $$\sum_{i=1}^{k}b_{i}\frac{\partial f_{i}}{\partial x_{j}}dx_{j} = 0, \ 1\leq j\leq n$$
on $C$. Note that $X_{sing}\cap U$ consists of those points where the matrix
$$\Big(\frac{\partial f_{i}}{\partial x_{j}}\Big)_{1\leq i\leq k, 1\leq j\leq n}$$
drops rank.
Since $C\cap X_{sing}$ consists of at most finitely many points, $b_{1},\ldots,b_{k}$ must vanash at a non empty open subset of $C\cap U$, which forces that
they vanash completely on $C\cap U$. Hence, $h\otimes 1 = \sum_{i=1}^{k} f_{i}\otimes b_{i} = 0 \in (\mathcal I_{X}/\mathcal I_{X}^{2})\otimes \mathcal O_{C}(U).$
\end{proof}

Now consider the deformation of $C$ on $X$. We have the following result.

\begin{proposition}
\label{cx}
Keep the above assumption. If $C\cap X_{sing}$ is 0-dimensional, the dimension of every component of $\mathcal H_{d,g}(X)$ at $[C]$ is at least $$\mathcal X(\mathcal N_{C/\mathbb P^{n}})-\mathcal X(\mathcal N_{X/\mathbb P^{n}}|_{C}).$$
Moreover, suppose $X$ is a complete intersection cut out by hypersurfaces $F_{1},\ldots, F_{k}$, deg $F_{i} = d_{i}, i=1,\ldots, k$.
The above lower bound can be written explicitly as  $$(n+1-\sum_{i=1}^{k}d_{i})d+(k-n+3)(g-1).$$
\end{proposition}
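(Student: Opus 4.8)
The plan is to combine Proposition~\ref{K3}, which reduces the dimension estimate to the difference
$$\mathrm{dim\ Hom}_{C}(\mathcal I_{C/X}/\mathcal I_{C/X}^{2},\mathcal O_{C})-\mathrm{dim\ Ext}^{1}_{C}(\mathcal I_{C/X}/\mathcal I_{C/X}^{2},\mathcal O_{C}),$$
with the exact sequences of K\"ahler differentials established in Proposition~\ref{Kn}. First I would recognize that the displayed difference is exactly the Euler characteristic $\chi\bigl(\mathcal{H}om_{C}(\mathcal I_{C/X}/\mathcal I_{C/X}^{2},\mathcal O_{C})\bigr)$ of the sheaf one wants to call $\mathcal N_{C/X}$, since $C$ is a smooth curve and hence $\mathcal O_{C}$-modules have homological dimension at most one on $C$, so only the $\mathrm{Hom}$ and $\mathrm{Ext}^{1}$ terms survive. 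The goal is then to show this Euler characteristic equals $\chi(\mathcal N_{C/\mathbb P^{n}})-\chi(\mathcal N_{X/\mathbb P^{n}}|_{C})$.

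The key computational step is to pass from the conormal sheaves to the normal sheaves via the sequences (\ref{cxn}) and (\ref{cxpn}). Dualizing (\ref{cxn}) into $\mathcal O_{C}$ and taking Euler characteristics (additive on short exact sequences even when the left map fails to be injective, because $\chi$ only depends on the sheaf-theoretic alternating sum and the finitely many torsion discrepancies at $C\cap X_{sing}$ contribute equally to both sides and cancel) relates $\chi(\mathcal N_{C/X})$ to $\chi(T_{\mathbb P^{n}}|_{C})$ and the contribution of the normal bundle of $X$. Concretely, I would first reduce $\chi(\mathcal N_{C/\mathbb P^{n}})$ to $(n+1)d-(n-3)(g-1)$ by the formula $h_{d,g,r}$ recalled in the introduction with $r=n$. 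Next I would compute $\chi(\mathcal N_{X/\mathbb P^{n}}|_{C})$: in the complete-intersection case $\mathcal N_{X/\mathbb P^{n}}=\bigoplus_{i=1}^{k}\mathcal O_{X}(d_{i})$, so $\mathcal N_{X/\mathbb P^{n}}|_{C}=\bigoplus_{i=1}^{k}\mathcal O_{C}(d_{i})$, and Riemann--Roch on $C$ gives $\chi(\mathcal O_{C}(d_{i}))=d_{i}d-(g-1)$, whence
$$\chi(\mathcal N_{X/\mathbb P^{n}}|_{C})=\Bigl(\sum_{i=1}^{k}d_{i}\Bigr)d-k(g-1).$$
Subtracting yields
$$\chi(\mathcal N_{C/\mathbb P^{n}})-\chi(\mathcal N_{X/\mathbb P^{n}}|_{C})=\Bigl(n+1-\sum_{i=1}^{k}d_{i}\Bigr)d+(k-n+3)(g-1),$$
which is the explicit bound claimed.

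The main obstacle is the first, sheaf-theoretic part: justifying that the quantity appearing in Proposition~\ref{K3} really equals $\chi(\mathcal N_{C/\mathbb P^{n}})-\chi(\mathcal N_{X/\mathbb P^{n}}|_{C})$ when $X$ is singular along $C\cap X_{sing}$. The subtlety is that $\mathcal I_{C/X}/\mathcal I_{C/X}^{2}$ need not be locally free at the singular points, so $\mathcal{H}om_{C}(\mathcal I_{C/X}/\mathcal I_{C/X}^{2},\mathcal O_{C})$ is a genuine normal sheaf rather than a normal bundle, and one must argue that the Euler characteristic is nonetheless computed by the same numerical formula. Here I would lean on Proposition~\ref{Kn}: the sequences (\ref{cxn}) and (\ref{cxpn}) are exact on the right and fail injectivity only on a zero-dimensional locus, so the kernels of the left maps are torsion sheaves supported on finitely many points; applying $\mathcal{H}om_{C}(-,\mathcal O_{C})$ and summing Euler characteristics, these finite-length torsion terms contribute $\mathrm{Ext}$-groups whose alternating sum vanishes, leaving exactly the bundle computation above. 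Verifying this cancellation carefully — that the torsion corrections enter symmetrically — is where the argument requires the most care, and is precisely why Lemma~\ref{fs} and the zero-dimensionality of $C\cap X_{sing}$ are essential.
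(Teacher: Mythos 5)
Your overall route --- reduce to the difference $\mathrm{dim\ Hom}_{C}-\mathrm{dim\ Ext}^{1}_{C}$ via Proposition~\ref{K3}, dualize the sequences of Proposition~\ref{Kn}, and finish with Riemann--Roch on $\mathcal N_{X/\mathbb P^{n}}|_{C}=\bigoplus_{i}\mathcal O_{C}(d_{i})$ --- is the paper's route, and your numerical computation of the explicit bound is correct. But the sheaf-theoretic half, which is the actual content of the proposition, has a genuine gap. First, the identification $\mathrm{dim\ Hom}_{C}(\mathcal I_{C/X}/\mathcal I_{C/X}^{2},\mathcal O_{C})-\mathrm{dim\ Ext}^{1}_{C}(\mathcal I_{C/X}/\mathcal I_{C/X}^{2},\mathcal O_{C})=\chi\bigl(\mathcal{H}om_{C}(\mathcal I_{C/X}/\mathcal I_{C/X}^{2},\mathcal O_{C})\bigr)$ is not what the local-to-global spectral sequence gives: on a curve the correct statement is that this difference equals $\chi(\mathcal{H}om)-h^{0}\bigl(\mathcal{E}xt^{1}_{\mathcal O_{C}}(\mathcal I_{C/X}/\mathcal I_{C/X}^{2},\mathcal O_{C})\bigr)$, and the local $\mathcal{E}xt^{1}$ term is nonzero precisely when the conormal sheaf has torsion, i.e.\ potentially at the points of $C\cap X_{sing}$. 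Since this correction enters with a minus sign, computing $\chi(\mathcal{H}om)$ only gives an \emph{upper} bound for the quantity in Proposition~\ref{K3}, which is the wrong direction for the claimed lower bound, unless you separately prove the conormal sheaf is torsion-free --- which you do not. Second, your ``torsion corrections cancel'' step rests on a misreading of Proposition~\ref{Kn}: under the hypothesis that $C\cap X_{sing}$ is zero-dimensional, that proposition asserts (and its proof establishes) that the left-hand maps of (\ref{cxn}) and (\ref{cxpn}) \emph{are} injective, so there are no kernels to cancel; the genuine failure of local freeness sits in the middle terms and in the conormal sheaf, and the cancellation of the resulting local $\mathcal{E}xt$ contributions is asserted but never exhibited.

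The missing step is to apply $\mathrm{Hom}(-,\mathcal O_{C})$ to the honest short exact sequences (\ref{cxn}) and (\ref{cxpn}) and work with the resulting long exact sequences of \emph{global} Ext groups, where additivity of the alternating sum is automatic and no torsion bookkeeping is needed. Doing so, the quantity in Proposition~\ref{K3} comes out as $\mathcal X(\mathcal N_{C/\mathbb P^{n}})-\mathcal X(\mathcal N_{X/\mathbb P^{n}}|_{C})-\mathrm{dim\ Ext}^{2}(\Omega_{X}\otimes\mathcal O_{C},\mathcal O_{C})$, so the entire proof reduces to the vanishing of $\mathrm{Ext}^{2}(\Omega_{X}\otimes\mathcal O_{C},\mathcal O_{C})$. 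The paper proves this by comparing $\mathcal T_{\mathbb P^{n}}\otimes\mathcal O_{C}\rightarrow\mathcal N_{X/\mathbb P^{n}}\otimes\mathcal O_{C}$ through a four-term sequence whose cokernel is a skyscraper sheaf supported on $C\cap X_{sing}$, deducing surjectivity of $H^{1}(\mathcal T_{\mathbb P^{n}}\otimes\mathcal O_{C})\rightarrow H^{1}(\mathcal N_{X/\mathbb P^{n}}\otimes\mathcal O_{C})$ and then the vanishing from (\ref{LLExtn}); alternatively, it follows from the general fact that $\mathrm{Ext}^{2}(\mathcal G,\mathcal O_{C})=0$ for every coherent sheaf $\mathcal G$ on a smooth projective curve, since $\mathcal{E}xt^{q}$ vanishes for $q\geq 2$ over a discrete valuation ring and $\mathcal{E}xt^{1}$ is a skyscraper. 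Your proposal never confronts this term; until it is dealt with (or, equivalently, until the local $\mathcal{E}xt$ contributions are actually controlled), the equality between the bound of Proposition~\ref{K3} and $\mathcal X(\mathcal N_{C/\mathbb P^{n}})-\mathcal X(\mathcal N_{X/\mathbb P^{n}}|_{C})$ is not established.
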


\begin{proof}
By the assumption, $C\subset X$ is generically unobstructed, so we can apply the result from \cite[Lemma 2.13, Theorem 2.15]{Kl}. The local dimension of any component of $\mathcal H_{d,g}(X)$ at $[C]$ is at least
\begin{eqnarray}
\label{extcx}
\mathrm{dim \ Hom}_{C}(\mathcal I_{C/X}/\mathcal I_{C/X}^{2}, \mathcal O_{C}) - \mathrm{dim \ Ext}_{C}^{1}(\mathcal I_{C/X}/\mathcal I_{C/X}^{2}, \mathcal O_{C})
\end{eqnarray}
Note that if $X$ is smooth, the value of (\ref{extcx}) is $\mathcal X(\mathcal N_{C/X})$, which equals
$\mathcal X(\mathcal N_{C/\mathbb P^{n}})-\mathcal X(\mathcal N_{X/\mathbb P^{n}}|_{C})$ due to the well-known exact sequence
$$ 0 \rightarrow \mathcal N_{C/X}\rightarrow \mathcal N_{C/\mathbb P^{n}}\rightarrow \mathcal N_{X/\mathbb P^{n}}|_{C}\rightarrow 0.$$
If $X$ is singular, apply the functor Hom($\cdotp$, $\mathcal O_{C}$) to (\ref{cxn}). Then we get a long exact sequence
\begin{eqnarray}
\label{LExtn}
0 \rightarrow \mathrm{Hom} (\Omega_{C}, \mathcal O_{C}) \rightarrow \mathrm{Hom} (\Omega_{X}\otimes \mathcal O_{C}, \mathcal O_{C})\rightarrow \mathrm{Hom} (\mathcal I_{C/X}/\mathcal I_{C/X}^{2},\mathcal O_{C}) \nonumber \\
   \hookrightarrow \mathrm{Ext}^{1} (\Omega_{C}, \mathcal O_{C}) \rightarrow \mathrm{Ext}^{1} (\Omega_{X}\otimes \mathcal O_{C}, \mathcal O_{C})\rightarrow \mathrm{Ext}^{1} (\mathcal I_{C/X}/\mathcal I_{C/X}^{2},\mathcal O_{C}) \nonumber \\
   \hookrightarrow  0. \hspace{9.6cm}
\end{eqnarray}
The last term is zero, because $\mathrm{Ext}^{2}(\Omega_{C},\mathcal O_{C}) = H^{2}(\mathcal T_{C}) = 0$.

Moreover, apply the functor Hom($\cdotp$, $\mathcal O_{C}$) to (\ref{cxpn}), we get another long exact sequence
\begin{eqnarray}
\label{LLExtn}
0 \rightarrow \mathrm{Hom}(\Omega_{X}\otimes\mathcal O_{C},\mathcal O_{C})\rightarrow \mathrm{Hom}(\Omega_{\mathbb P^{n}}\otimes \mathcal O_{C},\mathcal O_{C})\rightarrow \mathrm{Hom}((\mathcal I_{X}/\mathcal I_{X}^{2})\otimes \mathcal O_{C},\mathcal O_{C})  \nonumber \\
  \hookrightarrow  \mathrm{Ext}^{1}(\Omega_{X}\otimes\mathcal O_{C},\mathcal O_{C})\rightarrow \mathrm{Ext}^{1}(\Omega_{\mathbb P^{n}}\otimes \mathcal O_{C},\mathcal O_{C})\rightarrow \mathrm{Ext}^{1}((\mathcal I_{X}/\mathcal I_{X}^{2})\otimes \mathcal O_{C},\mathcal O_{C}) \nonumber \\
\hookrightarrow \mathrm{Ext}^{2}(\Omega_{X}\otimes \mathcal O_{C},\mathcal O_{C})\rightarrow 0. \hspace{6.8cm}
\end{eqnarray}
The last term is zero, because $\mathrm{Ext}^{2}(\Omega_{\mathbb P^{n}}\otimes O_{C}, \mathcal O_{C}) = H^{2}(\mathcal T_{\mathbb P^{n}}|_{C})=0$.

Note that $C$ is smooth, so $\mathrm{Ext}^{i}(\Omega_{C}, \mathcal O_{C})=H^{i}(\mathcal T_{C})$
and $\mathrm{Ext}^{i}(\Omega_{\mathbb P^{n}}\otimes \mathcal O_{C},\mathcal O_{C}) = H^{i}(\mathcal T_{\mathbb P^{n}}|_{C})$ for any $i$.
From (\ref{cxpn}), we know $(\mathcal I_{X}/\mathcal I_{X}^{2})\otimes \mathcal O_{C}$ is locally free,
so $\mathrm{Ext}^{i}((\mathcal I_{X}/\mathcal I_{X}^{2})\otimes\mathcal O_{C}, \mathcal O_{C}) = H^{i}(\mathcal N_{X/\mathbb P^{n}}|_{C})$.
Then by (\ref{LExtn}) and (\ref{LLExtn}), we have
\begin{eqnarray}
\label{XCEn}
&&\mathrm{dim \ Hom}(\mathcal I_{C/X}/\mathcal I_{C/X}^{2}, \mathcal O_{C}) - \mathrm{dim \ Ext}^{1}(\mathcal I_{C/X}/\mathcal I_{C/X}^{2}, \mathcal O_{C})  \nonumber \\
&=& \mathcal X(\mathcal T_{\mathbb P^{n}}|_{C}) - \mathcal X(\mathcal N_{X/\mathbb P^{n}}|_{C}) - \mathcal X(\mathcal T_{C}) - \mathrm{dim \ Ext}^{2}(\Omega_{X}\otimes \mathcal O_{C},\mathcal O_{C})\nonumber \\
&=& \mathcal X(\mathcal N_{C/\mathbb P^{n}}) - \mathcal X(\mathcal N_{X/\mathbb P^{n}}|_{C}) - \mathrm{dim \ Ext}^{2}(\Omega_{X}\otimes \mathcal O_{C},\mathcal O_{C}). \nonumber
\end{eqnarray}
$\mathcal X(\mathcal N_{C/\mathbb P^{n}})$ equals $h_{d,g,n} = (n+1)d-(n-3)(g-1)$.
If $X$ is a complete intersection cut out by $F_{1},\ldots, F_{k}$, the normal sheaf $\mathcal N_{X/\mathbb P^{n}}$ splits into
$\bigoplus_{i=1}^{k}\mathcal O_{X}(d_{i})$. Therefore, in this case we can compute $\mathcal X(\mathcal N_{X/\mathbb P^{n}}|_{C})$ explicitly as
$\mathcal X(\bigoplus_{i=1}^{k}\mathcal O_{C}(d_{i})) = \sum_{i=1}^{k}(1-g+dd_{i})$.

Now the theorem follows if we can show that $\mathrm{Ext}^{2}(\Omega_{X}\otimes \mathcal O_{C},\mathcal O_{C}) = 0$.
In case $X$ is smooth, we have the well-known exact sequence
$$ 0\rightarrow \mathcal T_{X}\otimes \mathcal O_{C}\rightarrow \mathcal T_{\mathbb P^{n}}\otimes \mathcal O_{C}\rightarrow \mathcal N_{X/\mathbb P^{n}}\otimes\mathcal O_{C} \rightarrow 0.$$
If $X$ is singular, the last map may not be surjective. Instead, we have
$$ 0\rightarrow \mathcal T_{X}\otimes \mathcal O_{C}\rightarrow \mathcal T_{\mathbb P^{n}}\otimes \mathcal O_{C}\rightarrow \mathcal N_{X/\mathbb P^{n}}\otimes\mathcal O_{C} \rightarrow \mathcal F\rightarrow 0, $$
where $\mathcal F$ is a sheaf supported at some points of $C\cap X_{sing}$.
Split the above sequence into two short exact sequences
\begin{eqnarray}
\label{En}
&0\rightarrow \mathcal T_{X}\otimes \mathcal O_{C}\rightarrow \mathcal T_{\mathbb P^{n}}\otimes \mathcal O_{C}\rightarrow \mathcal E\rightarrow 0 &\\
\label{Fn}
&0\rightarrow \mathcal E \rightarrow \mathcal N_{X/\mathbb P^{n}}\otimes\mathcal O_{C} \rightarrow \mathcal F\rightarrow 0. &
\end{eqnarray}
Since $H^{2}(\mathcal T_{X}\otimes \mathcal O_{C}) = 0$, then from (\ref{En}), the map
$H^{1}(\mathcal T_{\mathbb P^{n}}\otimes \mathcal O_{C})\rightarrow H^{1}(\mathcal E)$ is surjective.
Moreover, $\mathcal F$ is only supported at finitely many points on $C$, so $H^{1}(\mathcal F) = 0$. From (\ref{Fn}),
the map $H^{1}(\mathcal E)\rightarrow H^{1}(\mathcal N_{X/\mathbb P^{n}}\otimes\mathcal O_{C})$ is also surjective.
Hence, we get a surjective map $H^{1}(\mathcal T_{\mathbb P^{n}}\otimes O_{C})\rightarrow H^{1}(\mathcal N_{X/\mathbb P^{n}}\otimes\mathcal O_{C}) $,
i.e. a surjective map $\mathrm{Ext}^{1}(\Omega_{\mathbb P^{n}}\otimes\mathcal O_{C},\mathcal O_{C})\rightarrow \mathrm{Ext}^{1}((\mathcal I_{X}/\mathcal I_{X}^{2})\otimes \mathcal O_{C},\mathcal O_{C})$.
Then from (\ref{LLExtn}), it follows that $\mathrm{Ext}^{2}(\Omega_{X}\otimes O_{C},\mathcal O_{C})=0$.
\end{proof}

Now, apply Proposition \ref{K3} and \ref{cx} to our situation when $X = S$ is a surface in $\mathbb P^{3}$. The bound $4d+g-1-sd$ is still valid as a lower bound for $l_{d,g,3}$. Now we have completely finished the proof of Theorem \ref{r=3}.

At the end of this section, we want to show that the new bound in Theorem \ref{r=3}
makes sense. Using determinantal curves, we already constructed components with the expected dimension $4d$ and the corresponding values of $g$ and $d$ satisfy
$\frac{g^{2}}{d^{3}}\sim \frac{8}{9}$. Actually, if $d$ is large and
$g\leq \frac{1}{6\sqrt{2}}d\sqrt{d}+k_{1}d+k_{2}\sqrt{d}+k_{3}$, where
$k_{1}, k_{2}, k_{3}$ are some constants, there always exists a component of
$\mathcal H_{d,g,3}$ with the expected dimension $4d$, cf. \cite{P}.
Moreover, we can construct those components
up to $g\sim \frac{2\sqrt{2}}{3}d\sqrt{d}$ asymptotically, cf. \cite{F} and \cite{W}. Therefore, in the range $g^{2}< d^{3}$, $4d$ is almost the best lower bound for $l_{d,g,3}$. On the other hand, in the range $g^{2}\geq d^{3}$
we always have $\mu(d,g) < \sqrt{d}$. Furthermore, as $g$
increases, $\mu(d,g)$ decreases and $4d+g-1-\mu(d,g)d$ is
dominated by $g$. In fact, we know the dimension of a component of 
$\mathcal H_{d,g,3}$ whose general points correspond to smooth irreducible and nondegenerate curves is always less than or equal to $4d+g$ for any $d,g$, cf. \cite[2.b]{H}. Therefore, the result of Theorem \ref{r=3}
does not lose much information from the asymptotic perspective. More importantly, it is better than the expected dimension $4d$ if $g$ is much larger than $d$.

\section{The Hilbert scheme of curves in $\mathbb P^{4}$}
In this section we will prove Theorem \ref{r=4}. The idea of the proof is simple. We will show that if $g$ is large enough, a degree $d$ genus $g$ smooth irreducible and nondegenerate curve $C$ in $\mathbb P^{4}$ must be contained in a surface $S$ such that $S$ is a complete intersection and $C$ is not contained in its singular locus $S_{sing}$. By estimating the dimension of the deformation of $C$ on $S$, we can derive the desired result.

For the first step, let us recall some basic results from the Castelnuovo theory.
\begin{theorem}
\label{Ca}
Let $C$ be a degree $d$ genus $g$ reduced irreducible and nondegenerate curve in $\mathbb P^{r}$. Then
$g$ has an upper bound $\pi(d,r) = \frac{d^{2}}{2(r-1)} + O(d)$.
\end{theorem}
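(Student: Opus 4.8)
The plan is to bound $g$ from above by studying a general hyperplane section of $C$ together with its Hilbert function. Let $H\cong\mathbb P^{r-1}$ be a general hyperplane and set $\Gamma = C\cap H$, a reduced set of $d$ points in $H$. First I would relate the genus to the Hilbert function $h_{\Gamma}(m) := h^{0}(\mathcal O_{H}(m)) - h^{0}(\mathcal I_{\Gamma/H}(m))$ of $\Gamma$. Multiplication by the linear form cutting out $H$ gives the exact sequence
\begin{equation*}
0\rightarrow \mathcal I_{C}(m-1)\rightarrow \mathcal I_{C}(m)\rightarrow \mathcal I_{\Gamma/H}(m)\rightarrow 0,
\end{equation*}
valid since $C$ is irreducible and $H$ is general. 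Writing $h_{C}(m) = h^{0}(\mathcal O_{\mathbb P^{r}}(m)) - h^{0}(\mathcal I_{C}(m))$ for the Hilbert function of $C$ and using the identity $h^{0}(\mathcal O_{\mathbb P^{r}}(m)) - h^{0}(\mathcal O_{\mathbb P^{r}}(m-1)) = h^{0}(\mathcal O_{H}(m))$, the left exactness of the sequence yields the key inequality $h_{C}(m) - h_{C}(m-1) \geq h_{\Gamma}(m)$ for every $m$. Since $h_{C}(0) = 1$ and $h_{C}(m) = dm+1-g$ for $m\gg 0$, telescoping the differences produces the identity $g = \sum_{m\geq 1}\big(d - (h_{C}(m)-h_{C}(m-1))\big)$, and hence the bound
\begin{equation*}
g \leq \sum_{m\geq 1}\big(d - h_{\Gamma}(m)\big),
\end{equation*}
a finite sum because $h_{\Gamma}(m) = d$ for $m\gg 0$.

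The heart of the argument is then a lower bound on $h_{\Gamma}(m)$. Here I would invoke the general position theorem: in characteristic zero the $d$ points of a general hyperplane section of an irreducible nondegenerate curve are in uniform position, and in particular any $r$ of them are linearly independent. Granting this, Castelnuovo's lemma gives
\begin{equation*}
h_{\Gamma}(m) \geq \min\{d, \; m(r-1)+1\}.
\end{equation*}
I would prove this lemma by exhibiting, for each of the first $k = \min\{d,\, m(r-1)+1\}$ points $p_{i}$, a degree $m$ hypersurface passing through the remaining $k-1\leq m(r-1)$ points but avoiding $p_{i}$: partition those $k-1$ points into $m$ groups of size at most $r-1$, span each group by a hyperplane in $H$, and take the product of these $m$ hyperplanes, using general position to guarantee that none of the hyperplanes passes through $p_{i}$. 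This shows the $k$ points impose independent conditions on degree $m$ forms, which is exactly the asserted inequality.

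Finally I would combine the two bounds. Since $d - h_{\Gamma}(m)$ is nonnegative only while $m(r-1)+1 < d$, setting $M = \lfloor (d-1)/(r-1)\rfloor$ gives
\begin{equation*}
g \leq \sum_{m=1}^{M}\big(d-1-m(r-1)\big) = M(d-1) - (r-1)\frac{M(M+1)}{2}.
\end{equation*}
Substituting $M = d/(r-1) + O(1)$ and retaining only the top-order term leaves $\frac{d^{2}}{r-1} - \frac{d^{2}}{2(r-1)} = \frac{d^{2}}{2(r-1)}$, with all remaining contributions absorbed into $O(d)$. This yields $g \leq \pi(d,r) = \frac{d^{2}}{2(r-1)} + O(d)$, as claimed. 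The main obstacle is the general position theorem underlying the lower bound on $h_{\Gamma}$: it is the single ingredient that genuinely uses the irreducibility, nondegeneracy, and characteristic zero hypotheses, and it is precisely what prevents the telescoping estimate from collapsing; everything else is bookkeeping with the hyperplane section sequence and the evaluation of an arithmetic series.
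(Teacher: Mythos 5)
Your argument is correct and is precisely the classical Castelnuovo argument (general hyperplane section, the general position theorem, Castelnuovo's lemma on the Hilbert function of points in general position, and the telescoping of first differences), which is the proof in the reference \cite{H} that the paper cites in lieu of giving its own proof. Nothing essential is missing; the only hypotheses you rely on beyond bookkeeping are exactly the ones you flag, namely irreducibility, nondegeneracy, and characteristic zero for the general position theorem.
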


For the precise definition of $\pi(d,r)$ and the proof of the theorem, cf. e.g., \cite{H}.

By the above theorem, it is easy to find a low degree threefold $F$ that contains $C$.
\begin{lemma}
\label{F}
Let $k$ be a positive integer and $N = {k+4\choose 4}-1$. If $g$ satisfies
\begin{equation}
\label{gk}
g > \pi(dk, N),
\end{equation}
then $C$ is contained in an irreducible threefold $F$ of degree $a\leq k$.
\end{lemma}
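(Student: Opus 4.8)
The plan is to use the Veronese embedding to translate the problem of finding a degree $k$ threefold containing $C$ into a statement about nondegeneracy, and then apply the Castelnuovo bound (Theorem \ref{Ca}) to force degeneracy. First I would consider the $k$-th Veronese embedding $v_k \colon \mathbb P^4 \hookrightarrow \mathbb P^N$, where $N = \binom{k+4}{4}-1$, which sends a point to the vector of all degree $k$ monomials in the five homogeneous coordinates. Hyperplanes in $\mathbb P^N$ correspond bijectively to degree $k$ hypersurfaces in $\mathbb P^4$, so a degree $k$ threefold $F \subset \mathbb P^4$ containing $C$ is the same data as a hyperplane in $\mathbb P^N$ containing $v_k(C)$. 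Thus $C$ lies on \emph{no} threefold of degree $\le k$ precisely when $v_k(C)$ is nondegenerate in $\mathbb P^N$.

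Next I would compute the invariants of the image curve $C' = v_k(C)$. Since $v_k$ is an embedding and $C$ is smooth irreducible, $C'$ is again smooth irreducible of the same genus $g$, and its degree is $\deg(C') = \deg(\mathcal O_{\mathbb P^4}(k)|_C) = dk$. Now suppose for contradiction that $C$ is not contained in any threefold of degree $\le k$; then by the correspondence above $C'$ is a nondegenerate reduced irreducible curve of degree $dk$ and genus $g$ in $\mathbb P^N$. Theorem \ref{Ca} applies to $C'$ and gives $g \le \pi(dk, N)$. This directly contradicts the hypothesis (\ref{gk}), namely $g > \pi(dk,N)$. Hence $C$ must lie on some threefold of degree $a \le k$.

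Finally I would arrange that the threefold can be taken \emph{irreducible}. If every degree $\le k$ hypersurface containing $C$ were reducible, one could still pass to an irreducible component: since $C$ is irreducible, it is contained in a single irreducible component of any hypersurface $F$ containing it, and that component is an irreducible hypersurface (hence a threefold) of degree $a \le \deg F \le k$. This gives the irreducible threefold $F$ of degree $a \le k$ as claimed.

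The main obstacle is not the logical skeleton, which is a clean pigeonhole via Veronese and Castelnuovo, but rather checking that Theorem \ref{Ca} legitimately applies to $C'$: the Castelnuovo bound requires $C'$ to be reduced, irreducible, and nondegenerate, so the crux is precisely the verification that the Veronese image of a smooth irreducible curve remains irreducible and, under our contradiction hypothesis, nondegenerate of the stated degree and genus. Once the dictionary ``degree $k$ threefolds in $\mathbb P^4$ $\leftrightarrow$ hyperplanes in $\mathbb P^N$'' is set up, the inequality $g > \pi(dk,N)$ does all the work.
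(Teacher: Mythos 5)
Your proposal is correct and follows essentially the same route as the paper: embed $\mathbb P^4$ in $\mathbb P^N$ by the degree $k$ Veronese, note the image curve has degree $dk$ and genus $g$, invoke the Castelnuovo bound $\pi(dk,N)$ to force the image to be degenerate, translate the hyperplane back to a degree $k$ hypersurface, and pass to an irreducible component containing $C$. The only cosmetic difference is that you phrase it as a proof by contradiction while the paper argues directly.
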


\begin{proof}
Embed $\mathbb P^{4}$ into $\mathbb P^{N}$ by the Veronese map of degree $k$ . Then the image $C'$ of $C$ is a curve of degree $dk$ and genus $g$.
Since $g$ is larger than the Castelnuovo bound $\pi(dk, N)$, $C'$ must be contained in a hyperplane in $\mathbb P^{N}$. That is, $C$ is contained in a degree $k$ threefold in
$\mathbb P^{4}$. Then we take an irreducible component $F$ of this threefold that contains $C$. $F$ has degree $a \leq k$.
\end{proof}

Fix $F$ and its degree $a$. Our next goal is to find another threefold that contains $C$ as well.

\begin{lemma}
\label{G}
Suppose $l$ is an integer and $l\geq a$. Let $M = {l+4\choose 4}-{l-a+4\choose 4}-1$. If $g$ satisfies 
\begin{equation}
\label{gl}
g> \pi(dl, M),
\end{equation}
then we can find a degree $b$ irreducible threefold $G$ containing $C$ such that $b\leq l$ and the surface $S=F\cap G$ is a complete intersection.
\end{lemma}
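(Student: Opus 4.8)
The plan is to mimic the Veronese argument of Lemma~\ref{F}, but now relative to the threefold $F$ rather than to all of $\mathbb P^{4}$. Since $F$ is an irreducible threefold in $\mathbb P^{4}$, it is a hypersurface cut out by a single irreducible form $f$ of degree $a$, and a degree $l$ form vanishes on $F$ precisely when it is divisible by $f$. First I would embed $\mathbb P^{4}$ into $\mathbb P^{\binom{l+4}{4}-1}$ by the degree $l$ Veronese map $v_{l}$ and examine the linear span $\langle v_{l}(F)\rangle$. A hyperplane contains $v_{l}(F)$ exactly when its defining degree $l$ form is divisible by $f$, and the space of such forms has dimension $\binom{l-a+4}{4}$ (here I use $l\geq a$). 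Hence
\[
\dim \langle v_{l}(F)\rangle = \binom{l+4}{4}-1-\binom{l-a+4}{4} = M.
\]

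Next I would restrict attention to $C$. Because $v_{l}$ is an embedding, $C' = v_{l}(C)$ is a smooth irreducible curve isomorphic to $C$, so it has degree $dl$ and genus $g$, and it lies in $\langle v_{l}(F)\rangle = \mathbb P^{M}$. Suppose for contradiction that $C'$ were nondegenerate in this $\mathbb P^{M}$. Then Theorem~\ref{Ca} would give $g\leq \pi(dl,M)$, contradicting the hypothesis $g>\pi(dl,M)$. Therefore $C'$ is degenerate: it lies on some hyperplane $H\subset \mathbb P^{M}$.

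I would then convert this hyperplane back into a hypersurface of $\mathbb P^{4}$. The linear form cutting out $H$ on the linear subspace $\mathbb P^{M}$ extends to a linear form on the ambient projective space, i.e.\ to a degree $l$ form $s$ on $\mathbb P^{4}$; since $s$ restricts to a nonzero form on $\mathbb P^{M}$, it is not divisible by $f$, so the hypersurface $G'=\{s=0\}$ does not contain $F$, while $s$ vanishes along $C'=v_{l}(C)$ and hence $C\subset G'$. Finally I would pass to an irreducible component $G$ of $G'$ containing $C$, of degree $b\leq l$. Since $F$ is irreducible and $F\not\subset G'\supseteq G$, we get $G\neq F$; two distinct irreducible hypersurfaces share no common component, so $S=F\cap G$ has pure dimension $2$ and is a complete intersection containing $C$, as desired.

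The bulk of the argument is a direct transcription of Lemma~\ref{F}, so the only genuinely delicate points are bookkeeping ones: verifying that the degree $l$ part of the ideal of the irreducible hypersurface $F$ is exactly the multiples of $f$ (so that the span computation yields precisely $M$), and checking that the chosen component $G$ really meets $F$ properly rather than sharing a component. I expect the latter---guaranteeing that $S=F\cap G$ is an honest complete intersection and not a lower-dimensional or improper degeneration---to be the main point requiring care, and it is handled by the irreducibility of both $F$ and $G$ together with the fact that $G\subset G'\not\supset F$.
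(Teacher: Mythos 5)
Your proposal is correct and follows essentially the same route as the paper: a degree $l$ Veronese embedding combined with the Castelnuovo bound to force $C$ into extra degree $l$ hypersurfaces, and the count $\binom{l-a+4}{4}$ for multiples of the defining form of $F$ to find one not containing $F$. Working with the linear span $\langle v_{l}(F)\rangle\cong\mathbb P^{M}$ is just the dual formulation of the paper's count of independent degree $l$ threefolds, so the two arguments coincide in substance.
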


\begin{proof}
Embed $\mathbb P^{4}$ into $\mathbb P^{N}$ by the Veronese map of degree $l$. 
By a similar argument as before, we can show that $C$ is contained in at least ${l-a+4\choose 4}+1$ independent degree $l$ threefolds in $\mathbb P^{4}$. 
Notice that there are at most ${l-a+4\choose 4}$ independent degree $l$ threefolds containing $F$ as a component, since $F$ is irreducible. Hence, we can find a degree $l$ threefold containing $C$ but not $F$. Take an irreducible component $G$ of this threefold that contains $C$. $G$ has degree $b\leq l$ and 
$S=F\cap G$ is a complete intersection. 
\end{proof} 

In order to apply standard deformation theory for $C\subset S$, we should avoid the situation $C\subset S_{sing}$. 

\begin{lemma}
\label{Sing}
Let $S$ be a surface in $\mathbb P^{4}$ cut out by two threefolds of degree $a$ and $b$ respectively. If $S_{sing}$ is 1-dimensional, its degree has an upper bound $\frac{1}{2}ab(a+b-2)$. 
\end{lemma}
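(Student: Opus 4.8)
The plan is to realize $S_{sing}$ as part of a proper intersection of $S$ with an auxiliary hypersurface and then apply B\'ezout, the whole point being to extract an extra factor of $2$ from the fact that $S$ is singular \emph{along} $S_{sing}$.

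First I would describe $S_{sing}$ through the Jacobian. Write $S = V(F,G)$ with $\deg F = a$, $\deg G = b$, and let $F_i = \partial F/\partial x_i$, $G_i = \partial G/\partial x_i$. The complete intersection $S$ is singular exactly where the $2\times 5$ Jacobian matrix with rows $(F_0,\dots,F_4)$ and $(G_0,\dots,G_4)$ drops rank, i.e. where all the $2\times 2$ minors $M_{ij} = F_i G_j - F_j G_i$ vanish. Each $M_{ij}$ is a form of degree $(a-1)+(b-1) = a+b-2$. Since a general point of $S$ is a smooth point, some minor is nonzero there; taking $M$ to be a general $\mathbb{C}$-linear combination of the $M_{ij}$ produces a single form of degree $a+b-2$ that vanishes along $S_{sing}$ yet does not vanish identically on any component of $S$. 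Then $S_{sing}\subseteq S\cap V(M)$, which is $1$-dimensional, and because $\deg S = ab$ and $V(M)$ meets $S$ properly, B\'ezout gives
$$ \deg\big(S\cap V(M)\big) = \deg S\cdot \deg M = ab(a+b-2) = \sum_{W} i\big(W;S,V(M)\big)\,\deg W, $$
the sum running over the irreducible components $W$ of $S\cap V(M)$.

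The key step, and the source of the coefficient $\tfrac12$, is to show that each component of $S_{sing}$ appears in this intersection cycle with multiplicity at least $2$; granting this, the term of $S_{sing}$ alone contributes at least $2\deg S_{sing}$ to the sum, and since every term is nonnegative we get $2\deg S_{sing}\le ab(a+b-2)$, as claimed. To see the multiplicity bound, let $\eta$ be the generic point of a component of $S_{sing}$. The local ring $\mathcal{O}_{S,\eta}$ is one-dimensional and Cohen--Macaulay (as $S$ is a reduced complete intersection, hence has no embedded points), and since $V(M)$ is a Cartier divisor the intersection multiplicity equals $\mathrm{length}\,(\mathcal{O}_{S,\eta}/(M))$. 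As $M$ is a nonzerodivisor in $\mathcal{O}_{S,\eta}$ (it does not vanish on any component of $S$), this length is the Hilbert--Samuel multiplicity $e\big((M),\mathcal{O}_{S,\eta}\big)$, which is at least $e(\mathcal{O}_{S,\eta}) = \mathrm{mult}_{S_{sing}}S$ because $(M)\subseteq \mathfrak{m}_\eta$. Finally $S_{sing}\subseteq \mathrm{Sing}(S)$ forces $\mathrm{mult}_{S_{sing}}S\ge 2$.

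The global B\'ezout count is routine; the genuine work is the local multiplicity bookkeeping that yields the factor $2$, so I expect that to be the main obstacle. It can be made transparent on the model transverse singularity: for $S = V(x^2 - y^2,\,z)$, singular along the line $L=\{x=y=z=0\}$, the relevant minor is $M=x$ and $S\cap V(M) = V(x,z,y^2) = 2L$, exhibiting the doubling directly. The same phenomenon would, incidentally, improve the B\'ezout estimate of Lemma \ref{fs} by the identical factor, which is a reassuring consistency check on the constant.
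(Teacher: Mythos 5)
Your argument is correct, and it is genuinely different from the one in the paper. The paper slices: a general hyperplane section $X=H\cap S$ is a reduced complete intersection curve in $\mathbb P^{3}$ of degree $ab$ and arithmetic genus $p_{a}=\frac{1}{2}ab(a+b-4)+1$, and a reduced curve with at most $ab$ irreducible components has at most $p_{a}+ab-1=\frac{1}{2}ab(a+b-2)$ singular points (each node or worse contributes at least $1$ to the delta invariant, and $\sum\delta_{P}=p_{a}-\sum g_{i}+c-1$); since $H\cap S_{sing}\subset X_{sing}$, the degree bound follows. You instead stay on $S$ and cut with a general combination $M$ of the $2\times 2$ Jacobian minors, then extract the factor $\frac{1}{2}$ from the local multiplicity $\mathrm{length}\,(\mathcal O_{S,\eta}/(M))=e((M),\mathcal O_{S,\eta})\geq e(\mathcal O_{S,\eta})\geq 2$; all the commutative-algebra inputs you invoke (Cohen--Macaulayness of the complete intersection, length equals Hilbert--Samuel multiplicity for a parameter in a one-dimensional CM ring, monotonicity of $e$, and $e=1$ iff regular) are standard and correctly applied, and reducedness of $S$ is automatic here since non-reducedness along a component would make $S_{sing}$ two-dimensional. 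The paper's route is shorter because the genus formula for complete intersection curves does the bookkeeping for free; yours is more robust, requiring no slicing and no genus bound, and, as you observe, it also sharpens the crude B\'ezout estimate $k(k-1)$ used for the hypersurface case in Lemma \ref{fs} to $\frac{1}{2}k(k-1)$, which neither the paper's method nor its needs address. It is a pleasant coincidence that the two methods land on exactly the same constant.
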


\begin{proof}
Take a general hyperplane section $X = H\cap S$ in $\mathbb P^{4}$. $X$ is a curve of degree $ab$ and arithmetic genus $\frac{1}{2}ab(a+b-4)+1$ in $H\cong \mathbb P^{3}$. Even though $X$ might be reducible, the total number of its singularities is at most $ab + \frac{1}{2}ab(a+b-4) +1 - 1 = \frac{1}{2}ab(a+b-2)$. 
Since $H\cap S_{sing}\subset X_{sing}$, we get deg $S_{sing} \leq$ deg $X_{sing} \leq \frac{1}{2}ab(a+b-2)$.    
\end{proof}

By this lemma, we immediately get the following consequence.
\begin{lemma}
\label{S}
Keep the above assumption. If the degree $d$ of the curve $C$ satisfies 
\begin{equation}
\label{dab}
d> \frac{1}{2}ab(a+b-2),
\end{equation} 
then $C\cap S_{sing}$ is either empty or 0-dimensional. 
\end{lemma}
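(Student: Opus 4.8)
The plan is to argue by contradiction and reduce everything to the degree bound of Lemma \ref{Sing}. Suppose that $C\cap S_{sing}$ is neither empty nor $0$-dimensional; then it is a nonempty closed subscheme of $C$ of dimension $1$. Since $C$ is irreducible of dimension $1$ (being the general member of a component parametrizing smooth irreducible curves), its only $1$-dimensional closed subset is $C$ itself. Hence $C\cap S_{sing}$ being $1$-dimensional forces $C\subset S_{sing}$. In particular, $S_{sing}$ contains the curve $C$ and is therefore itself $1$-dimensional.

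At this point I would apply Lemma \ref{Sing} directly. Since $S=F\cap G$ is the complete intersection of threefolds of degrees $a$ and $b$, and $S_{sing}$ is $1$-dimensional, that lemma gives $\deg S_{sing}\leq \frac{1}{2}ab(a+b-2)$. On the other hand, the containment $C\subset S_{sing}$ yields $d=\deg C\leq \deg S_{sing}$. Combining the two inequalities gives $d\leq \frac{1}{2}ab(a+b-2)$, which contradicts the hypothesis (\ref{dab}). Therefore $C\cap S_{sing}$ is empty or $0$-dimensional, as claimed.

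There is essentially no obstacle here: the entire geometric content is already packaged into Lemma \ref{Sing}, and the only step beyond it is the elementary observation that an irreducible curve contained set-theoretically in a $1$-dimensional locus must \emph{be} a component of that locus, so that its degree is bounded by the degree of the locus. The rest is the single degree comparison. The one point worth double-checking is that $S$ is genuinely a complete intersection, so that Lemma \ref{Sing} applies at all --- but this was arranged in Lemma \ref{G}, where $G$ was chosen precisely so that $S=F\cap G$ is a complete intersection.
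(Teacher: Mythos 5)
Your argument is correct and is exactly the reasoning the paper intends: it states the lemma as an immediate consequence of Lemma \ref{Sing}, namely that a $1$-dimensional intersection would force the irreducible curve $C$ into $S_{sing}$, whence $d\leq\deg S_{sing}\leq\frac{1}{2}ab(a+b-2)$, contradicting (\ref{dab}). Nothing further is needed.
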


Consider the deformation of $C$ on $S$. Since $S$ is a complete intersection and $C\not\subset S_{sing}$, we can apply Proposition \ref{cx} to derive 
the following result.
\begin{lemma}
\label{Def}
The dimension of the deformation of $C$ on $S$ is at least $5d+g-1-(a+b)d$. 
\end{lemma}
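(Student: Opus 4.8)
The plan is to read off Lemma \ref{Def} as a direct specialization of Proposition \ref{cx}, taking $X = S$ inside $\mathbb{P}^4$. First I would record that $S = F \cap G$ meets all the hypotheses of Proposition \ref{cx}: by the constructions in Lemmas \ref{F} and \ref{G}, the threefolds $F$ and $G$ are irreducible hypersurfaces in $\mathbb{P}^4$ of degrees $a$ and $b$ (every codimension-one irreducible subvariety of $\mathbb{P}^4$ is cut out by a single irreducible polynomial), and since $G$ was chosen not to contain $F$, the intersection $S = F \cap G$ is proper, hence a $2$-dimensional global—and therefore local—complete intersection. Thus in the notation of Proposition \ref{cx} we have $n = 4$, $k = 2$, and $(d_{1}, d_{2}) = (a, b)$.

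Next I would invoke Lemma \ref{S}, whose hypothesis (\ref{dab}) is assumed in force, to conclude that $C \cap S_{sing}$ is either empty or $0$-dimensional. This is precisely the condition required to apply Proposition \ref{cx}, which then guarantees that $C \subset S$ is generically unobstructed and that every component of $\mathcal H_{d,g}(S)$ at $[C]$ has dimension at least the explicit lower bound furnished there.

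Finally I would substitute into the closed-form expression $(n+1-\sum_{i} d_{i})d + (k-n+3)(g-1)$ from Proposition \ref{cx}. With $n = 4$, $k = 2$, and $\sum_{i} d_{i} = a + b$, this becomes
$$ (5 - (a+b))d + (1)(g-1) = 5d + g - 1 - (a+b)d, $$
which is exactly the asserted bound, completing the proof.

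Since the substantive cohomological and deformation-theoretic work has already been carried out inside Proposition \ref{cx}—in particular the vanishing $\mathrm{Ext}^{2}(\Omega_{S}\otimes \mathcal O_{C}, \mathcal O_{C}) = 0$ and the identification of the Ext-difference in (\ref{extcx}) with the Euler-characteristic difference $\mathcal X(\mathcal N_{C/\mathbb P^{4}})-\mathcal X(\mathcal N_{S/\mathbb P^{4}}|_{C})$—there is no genuine obstacle remaining here beyond bookkeeping. The one point deserving a moment's care is confirming that $S$ is a \emph{complete} intersection of the expected dimension $2$, that is, that $F$ and $G$ are distinct irreducible hypersurfaces so that $F \cap G$ neither drops dimension nor fails to be a complete intersection; this is exactly what the choice of $G$ in Lemma \ref{G} secures.
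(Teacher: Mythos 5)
Your proposal is correct and follows exactly the paper's route: the paper derives Lemma \ref{Def} by noting that $S$ is a complete intersection with $C\not\subset S_{sing}$ and then citing Proposition \ref{cx} with $n=4$, $k=2$, $(d_1,d_2)=(a,b)$. Your substitution into the closed-form bound and your verification of the hypotheses via Lemmas \ref{G} and \ref{S} match the paper's (largely implicit) argument.
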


Now we have all the ingredients to prove Theorem \ref{r=4}. 
\begin{proof}
By an elementary calculation, if $g>3d\sqrt{d}+O(d)$, we can find integers $k, a, l, b$ successively in the above setting such that they satisfy the inequalities (\ref{gk}), (\ref{gl}) and (\ref{dab}). Therefore, by Lemma \ref{F}, \ref{G} and \ref{S}, we know that $C$ lies in a complete intersection surface $S$ of type $(a, b)$ and $C\not\subset S_{sing}$. Moreover, we can check that $(a+b)d < g$. Then by Lemma \ref{Def}, the dimension of the deformation of $C$ on $S$ $\geq 5d + g -1 - (a+b)d \geq 5d > 24 = $ dim PGL(5).    
\end{proof}

It is possible to enlarge the range $g>3d\sqrt{d}+O(d)$ by refining the results in Lemma \ref{F}, \ref{G} and \ref{Sing}. However, it seems that only the leading coefficient could be improved rather than the exponent $d^{3/2}$. So when $g$ is slightly bigger than $d$, the situation remains mysterious to us. On the other hand, by the result of \cite{CCG}, Conjecture \ref{r>4} mentioned in the introduction section sounds highly possible and might be handled by an analogous argument. We state the conjecture again as the end of this section.

\begin{conjecture}
For $r\geq 5$, there always exists a constant $\lambda_{r}$ such that if $g\geq \lambda_{r}d\sqrt{d}+O(d)$, a degree $d$ genus $g$ smooth irreducible 
and nondegenerate curve in $\mathbb P^{r}$ is not rigid. 
\end{conjecture}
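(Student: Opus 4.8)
The plan is to mirror the proof of Theorem \ref{r=4} as closely as possible, replacing the surface cut out by two threefolds in $\mathbb{P}^4$ with a surface realized as a complete intersection of $r-2$ hypersurfaces in $\mathbb{P}^r$. Concretely, for a smooth irreducible nondegenerate curve $C\subset\mathbb{P}^r$ of degree $d$ and genus $g$, I would first force $C$ onto a complete intersection surface $S=F_1\cap\cdots\cap F_{r-2}$ of controlled degrees $d_1,\ldots,d_{r-2}$ with $C\not\subset S_{sing}$, and then apply Proposition \ref{cx} with $n=r$ and $k=r-2$. The resulting lower bound for the dimension of the deformation of $C$ on $S$ is
$$\Big(r+1-\sum_{i=1}^{r-2}d_i\Big)\,d+(k-n+3)(g-1)=\Big(r+1-\sum_{i=1}^{r-2}d_i\Big)\,d+g-1.$$
As soon as this quantity exceeds $\dim\mathrm{PGL}(r)=r^2+2r$, the curve $C$ admits deformations not induced by automorphisms of $\mathbb{P}^r$, hence is not rigid.

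The first stage is the Castelnuovo input. Embedding $\mathbb{P}^r$ by the degree $l$ Veronese map into $\mathbb{P}^N$ with $N=\binom{l+r}{r}-1$ and invoking Theorem \ref{Ca} exactly as in Lemma \ref{F} and Lemma \ref{G}, one sees that if $g>\pi(dl,N)$ then $C$ lies in a hypersurface of degree at most $l$; since $\pi(dl,N)\sim\tfrac{r!}{2}\,d^2 l^{2-r}$, the forced degree scales like $l\sim(d^2/g)^{1/(r-2)}$. Under the hypothesis $g\geq\lambda_r d\sqrt d+O(d)$ this produces hypersurfaces of degree $O(d^{1/(2(r-2))})$, so that $\sum_i d_i=O(d^{1/(2(r-2))})$ and therefore $(\sum_i d_i)\,d=O(d^{\,1+1/(2(r-2))})=o(g)$ for every $r\geq 4$. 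Consequently the lower bound above is asymptotic to $g$, which tends to infinity and eventually dominates $r^2+2r$; this is precisely the mechanism that makes the conjecture plausible and that underlies \cite{CCG}. One also has to reproduce Lemma \ref{Sing} and Lemma \ref{S}: a B\'ezout and general-hyperplane-section estimate should bound $\dim S_{sing}$ and guarantee $C\not\subset S_{sing}$ once $d$ is large relative to the degrees $d_i$, which is automatic in this asymptotic regime.

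The main obstacle, and the reason the argument does not immediately carry over to $r\geq 5$, is the iterative construction of the complete intersection $S$. In $\mathbb{P}^4$ only two threefolds were needed, and the second could be chosen cutting $F_1$ properly because a hypersurface is a principal divisor: the degree $l$ threefolds containing the irreducible hypersurface $F_1$ are exactly the multiples of its equation, so their number is the clean quantity $\binom{l-a+4}{4}$ appearing in Lemma \ref{G}. For $r\geq 5$ the intermediate varieties $Y_j=F_1\cap\cdots\cap F_j$ have codimension $j>1$ and are in general singular and possibly reducible, so the count of degree $l$ hypersurfaces containing $Y_j$ is no longer governed by a divisor but by the Hilbert function of $Y_j$, which I do not see how to bound uniformly. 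To make each successive choice of $F_{j+1}$ cut $Y_j$ properly, keeping the intersection of the expected codimension $j+1$ and a complete intersection through $C$, one would need at every step that $C$ lie on strictly more degree $l$ hypersurfaces than those containing $Y_j$ as a component. Establishing such a comparison of Hilbert functions for the higher codimension intermediate loci is the crux; I expect this is exactly the gap that the results of \cite{CCG} are meant to fill, and a careful control of these Hilbert functions, together with the singular locus estimate, should yield the conjecture with some explicit, though likely non-optimal, constant $\lambda_r$.
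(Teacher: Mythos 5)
This statement is left as an open conjecture in the paper: the author explicitly writes that he has ``not been able to extend the result to $r\geq 5$'' and only remarks that the results of \cite{CCG} make the conjecture plausible and that it ``might be handled by an analogous argument.'' So there is no proof in the paper to compare against, and your proposal does not supply one either --- as you yourself acknowledge, it is a plan with an unfilled hole at its center. To be concrete about where the hole is: the asymptotic bookkeeping in your second paragraph is fine (the Veronese/Castelnuovo step does force hypersurfaces of degree $l=O\big((d^2/g)^{1/(r-2)}\big)$, and with $g\gtrsim d^{3/2}$ the correction term $(\sum_i d_i)d=O(d^{1+1/(2(r-2))})$ is indeed $o(g)$ for $r\geq 4$, so the conclusion would follow \emph{if} the surface $S$ existed). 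The entire difficulty is the existence of $S$. Lemma \ref{G} works in $\mathbb P^4$ precisely because the locus to be avoided, an irreducible hypersurface $F$, is a divisor: the degree $l$ forms vanishing on $F$ are exactly the multiples of its equation, giving the exact count $\binom{l-a+4}{4}$, which can be beaten by the Castelnuovo count of forms vanishing on $C$. For $r\geq 5$ the intermediate loci $Y_j=F_1\cap\cdots\cap F_j$ have codimension $j\geq 2$, may be reducible and singular, and the dimension of the space of degree $l$ forms vanishing on a component $Z$ of $Y_j$ is controlled by the Hilbert function of $Z$, for which one needs uniform lower bounds (depending only on $\deg Z$ and $l$) strong enough to be beaten by $h^0(\mathcal I_C(l))$ at every step. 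You correctly identify this as the crux, but identifying a gap is not closing it; nothing in your proposal, nor in the paper, supplies such bounds. There is also a secondary unaddressed point: even granting a proper choice of each $F_{j+1}$, one must verify that the resulting $S$ is a genuine complete intersection scheme (so that Proposition \ref{cx} applies with its explicit formula) and generalize the singular-locus estimate of Lemma \ref{Sing} to complete intersection surfaces in $\mathbb P^r$; the latter is routine via general hyperplane sections but is not written down.

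In short: your proposal reproduces the heuristic the author himself offers for why the conjecture should be true, and correctly locates the obstruction that prevented the author from proving it, but it is not a proof. The statement remains a conjecture.
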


\section{The Hilbert scheme of curves on a quadric threefold}
In this section we will prove Theorem \ref{quadric}. Recall that 
$\mathcal H_{d, g}(Q)$ parameterizes degree $d$ genus $g$ smooth irreducible and nondegenerate curves on a smooth quadric $Q$ in $\mathbb P^{4}$. 
For [$C$]$\in \mathcal H_{d,g}(Q)$,  
$\mathcal X(\mathcal N_{C/Q}) = 3d$ is a lower bound for the dimension of any component of $\mathcal H_{d,g}(Q)$. Theorem \ref{quadric} provides a further 
analysis for the sharpness of this bound. Its proof consists of two steps. 

Firstly, if $g$ is large enough, $C$ must lie on another threefold $F$ of low degree. Consider the deformation of $C$ on the surface $X = Q\cap F$. We can easily derive the first part of Theorem \ref{quadric}. 
For the second part, we use a similar method as in \cite{P}. A component whose general element represents a curve as the intersection of $Q$ and a determinantal surface has dimension $3d$. Then we apply the 
smoothing technique in \cite{S} to enlarge the range of the pair ($d, g$) to cover the case when $g<\frac{2}{15\sqrt{5}} d\sqrt{d} + O(d)$.

By the main result of \cite{C}, we can verify the first step easily. 

\begin{lemma} 
\label{GB}
If $g> \frac{1}{\sqrt{2}}d\sqrt{d} + O(d)$, the dimension of the deformation of $C$ on $Q$ is bigger than $3d$. 
\end{lemma}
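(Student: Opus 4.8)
The plan is to run the quadric analogue of the argument behind Theorem \ref{r=4}: when $g$ is large I would force $C$ onto a complete intersection surface lying inside $Q$ and then extract a dimension bound from Proposition \ref{cx}. Concretely, I would search for an irreducible threefold $F$ of some degree $a$ with $C\subset F$ and $Q\not\subset F$; since $Q$ and $F$ are then distinct irreducible threefolds in $\mathbb P^{4}$, the intersection $X=Q\cap F$ is a genuine complete intersection surface of type $(2,a)$ containing $C$. Because $X\subset Q$ we have $\mathcal H_{d,g}(X)\subset\mathcal H_{d,g}(Q)$, so any lower bound for $\dim_{[C]}\mathcal H_{d,g}(X)$ is also a lower bound for the dimension of the deformation of $C$ on $Q$.

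The production of such an $F$ is exactly the ``first step,'' and it is where the main result of \cite{C} is used. As in the Veronese-plus-Castelnuovo arguments of Lemmas \ref{F} and \ref{G}, a Castelnuovo/Halphen-type genus bound for curves on $Q$ should guarantee that once $g$ exceeds a threshold of order $\frac{d^{2}}{2a}$, the curve $C$ lies on strictly more independent degree $a$ threefolds than the $\binom{a+2}{4}$ that merely contain $Q$. Hence $C$ lies on a degree $a$ threefold not containing $Q$, and passing to an irreducible component through $C$ yields the desired $F$ of degree $\le a$.

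Having fixed $X=Q\cap F$, I would first check the hypotheses of Proposition \ref{cx}. Applying Lemma \ref{Sing} with the two degrees $2$ and $a$ bounds $\deg X_{sing}\le \frac{1}{2}\cdot 2\cdot a\cdot(2+a-2)=a^{2}$, so as in Lemma \ref{S} the condition $d>a^{2}$ forces $C\cap X_{sing}$ to be empty or $0$-dimensional, hence $C\not\subset X_{sing}$. Then Proposition \ref{cx}, applied to the complete intersection $X$ of type $(2,a)$ in $\mathbb P^{4}$ (so $n=4$, $k=2$, $\sum d_{i}=2+a$), shows that every component of $\mathcal H_{d,g}(X)$ at $[C]$ has dimension at least $(5-(2+a))d+(2-4+3)(g-1)=3d+g-1-ad$. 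This exceeds $3d$ exactly when $g>ad+1$.

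It remains to optimize over $a$, and this is where the constant $\frac{1}{\sqrt2}$ emerges. The two requirements pull against each other: forcing $C$ onto $X$ wants $g$ at least of order $\frac{d^{2}}{2a}$, which is expensive for small $a$, while beating $3d$ wants $g>ad$, which is expensive for large $a$. Balancing $\frac{d^{2}}{2a}\sim ad$ gives $a\sim\sqrt{d/2}$, for which $a^{2}\sim d/2<d$ (so $d>a^{2}$ holds) and both thresholds coincide at $g>\frac{1}{\sqrt2}d\sqrt d+O(d)$, as claimed. The genuinely hard input is not the final dimension count, which is an immediate application of Proposition \ref{cx}, but the sharp Castelnuovo-type bound of \cite{C} underlying the first step, together with the bookkeeping needed to make the choice of $a$, the nonsingularity condition $d>a^{2}$, and the two genus thresholds simultaneously compatible so that the optimal constant $\frac{1}{\sqrt2}$---rather than a larger one---actually comes out.
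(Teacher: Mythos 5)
Your proposal follows the paper's proof essentially verbatim: invoke the genus bound of \cite{C} to place $C$ on a surface $X=Q\cap F$ of type $(2,a)$ with $a\lesssim\sqrt{d/2}$, rule out $C\subset X_{sing}$ by a degree count, and apply Proposition \ref{cx} to obtain the lower bound $3d+g-1-ad>3d$. The only discrepancy is the precise form of the threshold imported from \cite{C} --- the paper uses $g>\frac{d^{2}}{4k}+\frac{1}{2}kd$ to produce an integral surface in $|\mathcal O_{Q}(a)|$ with $a\leq k$, rather than your ``order $\frac{d^{2}}{2a}$'' (a plain Veronese-plus-Castelnuovo count as in Lemmas \ref{F} and \ref{G} would in fact give a worse constant, so the sharp result of \cite{C} is genuinely needed, as you note) --- but both thresholds balance against $ad$ at $a\sim\sqrt{d/2}$ and yield the same constant $\frac{1}{\sqrt{2}}$.
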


 \begin{proof}
When $d$ and $g$ satisfy the above inequality, we can find an integer $k$ such that $d>2k(k-1)$ and $g>\frac{d^{2}}{4k}+\frac{1}{2}kd$. 
By the result of \cite{C}, there exists an intergral surface $X\in |\mathcal O_{Q}(a)|$ containing $C$, where $a\leq k$. 
Since $d>2k(k-1)$ and $X$ is of degree $2a$, $C\not\subset X_{sing}$. By Proposition \ref{cx},  
$\mathcal X(\mathcal N_{C/X}) = 3d + g - ad - 1$ provides a lower bound for the dimension of the deformation of $C$ on $X$.     
A simple calculation shows that $3d + g - ad - 1 \geq 3d + g - kd - 1 > 3d$. 
\end{proof}

The second step is harder. We still want to construct a component of the Hilbert scheme that parameterizes certain determinantal curves. But the curves should be contained in the quadric $Q$. A natural idea is to take the intersection of a determinantal surface with $Q$. 

Let $\big(H_{ij}\big)$ be a $t\times (t+1)$ matrix. The entry $H_{ij}$ is a general linear form in $\mathbb P^{4}$. Those $t\times t$ minors 
define a determinantal surface $S$. The ideal sheaf of $S$ has the following resolution
$$ 0 \rightarrow \mathcal O^{\oplus t}_{\mathbb P^{4}}(-t-1)\rightarrow \mathcal O^{\oplus (t+1)}_{\mathbb P^{4}}(-t) \rightarrow \mathcal I_{S}\rightarrow 0.$$
By Bertini, if we take a general quadric threefold $Q$, $C=Q\cap S$ is smooth. It is not hard to get the degree and genus of $C$,
$$ d = t(t+1), $$ 
$$ g = \frac{2}{3}t^{3} - \frac{1}{2}t^{2} - \frac{7}{6}t + 1.$$
Note that asymptotically $g \sim \frac{2}{3}d\sqrt{d}$.

Let us count parameters. The dimension of the component parameterizing curves generated in the above way is
$5t(t+1) - 1 - \text{dim PGL}(t) - \text{dim PGL}(t+1) = 3t(t+1) = 3d$. In order to show that this is a real
component of $\mathcal H_{d,g}(Q)$, we have to check that for $C = S\cap Q$, $H^{1}(\mathcal N_{C/Q}) = 0$. 
Actually for general $S$ and $Q$, the ideal sheaf $\mathcal I_{C/Q}$ has the resolution 
$$ 0\rightarrow \mathcal O^{\oplus t}_{Q}(-t-1) \rightarrow \mathcal O^{\oplus (t+1)}               _{Q}(-t)\rightarrow \mathcal I_{C/Q}\rightarrow 0. $$ 
By \cite[Remark 2.2.6]{Kl}, we know that $H^{1}(\mathcal N_{C/Q}) = \text{Ext}_{Q}^{2}(\mathcal I_{C/Q}, \mathcal I_{C/Q})$.
Apply the functor $\text{Hom}_{Q}(-, \mathcal I_{C/Q})$ to the exact sequence. Then it is easy to derive the conclusion $H^{1}(\mathcal N_{C/Q})= 0$. 

The above construction is nice. But it has strong restriction on the values of $d$ and $g$. We really want to extend the result to more general values of $d$ and $g$. Here we will follow the methods in \cite{P} and \cite{S}. The idea works as follows. 
Take a smooth determinantal curve $\Gamma$ constructed as above and a smooth rational curve $\gamma$ on $Q$ such that they meet transversely. 
Further assume that 
$H^{1}(\mathcal N_{\Gamma/Q})=H^{1}(\mathcal N_{\gamma/Q})=0$. Then the nodal curve $\Gamma\cup \gamma$ can be smoothed out in $Q$. 
Moreover, the vanishing property of $H^{1}(\mathcal N)$ is locally preserved under this smoothing process. Then after smoothing the nodal curve, we may get the degree and genus in a more general range. 

Firstly, let us introduce an important smoothing technique used in \cite{S}.  

\begin{lemma}
\label{SM}
Let $\Gamma' = \Gamma \cup \gamma$ be a nodal union of two smooth irreducible curves on the quadric threefold $Q$. 
$\Gamma \cap \gamma = {P_{1},\ldots, P_{\delta}}$.  If 
$H^{1}(\mathcal N_{\Gamma/Q})=H^{1}(\mathcal N_{\gamma/Q}) = H^{1}(\mathcal N_{\gamma/Q}(-P_{1}-\ldots-P_{\delta}))=0$, then 
$H^{1}(\mathcal N_{\Gamma'/Q})=0 $ and $\Gamma'$ is smoothable in $Q$. 
\end{lemma}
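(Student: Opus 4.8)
Since $\Gamma$ and $\gamma$ are smooth and meet transversally, $\Gamma' = \Gamma \cup \gamma$ is nodal and hence a local complete intersection in $Q$: near each $P_{i}$ the two branches span a plane, so $\Gamma'$ is locally cut out by a regular sequence, and its normal sheaf $\mathcal N_{\Gamma'/Q}$ is a rank-$2$ vector bundle. My plan is to treat the two assertions separately. First I would prove the vanishing $H^{1}(\mathcal N_{\Gamma'/Q}) = 0$, which makes $[\Gamma']$ a smooth (unobstructed) point of the Hilbert scheme of $Q$; then, invoking the smoothing criterion of \cite{S}, I would check that the global normal directions actually realize the smoothing of every node, so that the general nearby curve is smooth. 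Throughout I write $\Delta = P_{1} + \cdots + P_{\delta}$ for the nodal divisor, viewed on whichever component is relevant.

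For the vanishing I would decompose $\mathcal N_{\Gamma'/Q}$ along the two components. Since $\mathcal N_{\Gamma'/Q}$ is locally free, tensoring the structure sequence $0 \rightarrow \mathcal O_{\gamma}(-\Delta) \rightarrow \mathcal O_{\Gamma'} \rightarrow \mathcal O_{\Gamma} \rightarrow 0$ gives the restriction sequence
\[ 0 \rightarrow \mathcal N_{\Gamma'/Q}|_{\gamma}(-\Delta) \rightarrow \mathcal N_{\Gamma'/Q} \rightarrow \mathcal N_{\Gamma'/Q}|_{\Gamma} \rightarrow 0. \]
To compare the restrictions with the normal bundles of the branches I would work locally at a node, embedding $\Gamma' \subset \Sigma \subset Q$ into a smooth surface $\Sigma$; this identifies the natural maps $\mathcal N_{\Gamma/Q} \rightarrow \mathcal N_{\Gamma'/Q}|_{\Gamma}$ and $\mathcal N_{\gamma/Q} \rightarrow \mathcal N_{\Gamma'/Q}|_{\gamma}$ as inclusions with length-$\delta$ skyscraper cokernels supported at the nodes (the twist by $\Delta$ being contributed by the opposite branch):
\[ 0 \rightarrow \mathcal N_{\Gamma/Q} \rightarrow \mathcal N_{\Gamma'/Q}|_{\Gamma} \rightarrow \mathcal T_{\Gamma} \rightarrow 0, \qquad 0 \rightarrow \mathcal N_{\gamma/Q} \rightarrow \mathcal N_{\Gamma'/Q}|_{\gamma} \rightarrow \mathcal T_{\gamma} \rightarrow 0. \]
Because skyscrapers have vanishing $H^{1}$, the hypothesis $H^{1}(\mathcal N_{\Gamma/Q}) = 0$ yields $H^{1}(\mathcal N_{\Gamma'/Q}|_{\Gamma}) = 0$, and twisting the $\gamma$-sequence by $-\Delta$ together with $H^{1}(\mathcal N_{\gamma/Q}(-\Delta)) = 0$ yields $H^{1}(\mathcal N_{\Gamma'/Q}|_{\gamma}(-\Delta)) = 0$. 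Substituting both into the restriction sequence gives $H^{1}(\mathcal N_{\Gamma'/Q}) = 0$.

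For smoothability, the vanishing just proved shows that $[\Gamma']$ is a smooth point of the Hilbert scheme of $Q$, so every first-order embedded deformation integrates and it is enough to produce a single tangent direction that smooths all $\delta$ nodes at once. Following the criterion in \cite{S}, I would use that $\Gamma'$ is a local complete intersection and consider the first cotangent sheaf $T^{1}_{\Gamma'}$, a length-$\delta$ skyscraper whose fibre $T^{1}_{P_{i}}$ is the one-dimensional space of smoothing directions of the node $P_{i}$. There is a natural surjection $\mathcal N_{\Gamma'/Q} \rightarrow T^{1}_{\Gamma'}$, and the general deformation of $\Gamma'$ in $Q$ smooths every node precisely when the induced map $H^{0}(\mathcal N_{\Gamma'/Q}) \rightarrow \bigoplus_{i} T^{1}_{P_{i}}$ is surjective. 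Writing $\mathcal N' = \ker(\mathcal N_{\Gamma'/Q} \rightarrow T^{1}_{\Gamma'})$ for the \emph{equisingular normal sheaf}, this surjectivity follows once $H^{1}(\mathcal N') = 0$. The plan is to compute $H^{1}(\mathcal N')$ through the two components: $\mathcal N'$ is assembled from $\mathcal N_{\Gamma/Q}$ and $\mathcal N_{\gamma/Q}$ glued along the nodes, and restriction sequences analogous to those above express its cohomology in terms of $H^{1}(\mathcal N_{\Gamma/Q})$, $H^{1}(\mathcal N_{\gamma/Q})$ and $H^{1}(\mathcal N_{\gamma/Q}(-\Delta))$, all zero by hypothesis. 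Hence $H^{1}(\mathcal N') = 0$, the smoothing map is onto, and the general curve in the component through $[\Gamma']$ is smooth, i.e. $\Gamma'$ is smoothable in $Q$.

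The \emph{main obstacle} I anticipate is the local analysis at the nodes. Concretely, I must pin down the precise form of the comparison sequences, namely the length-$\delta$ skyscrapers $\mathcal T_{\Gamma}, \mathcal T_{\gamma}$ and the $\pm\Delta$ twists produced by the opposite branch, and, more delicately, the exact structure of the equisingular normal sheaf $\mathcal N'$ and its restrictions to the two components, so that its $H^{1}$ genuinely reduces to the three hypothesized vanishings rather than to twisted-down versions that are not assumed. Once these node-local identifications are correct, both the vanishing $H^{1}(\mathcal N_{\Gamma'/Q}) = 0$ and the smoothing surjectivity are formal cohomology chases, and one sees that $H^{1}(\mathcal N_{\gamma/Q}(-\Delta)) = 0$ is the essential extra hypothesis beyond the unobstructedness of the two individual components.
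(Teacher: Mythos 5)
Your proposal is correct and lands on the same engine as the paper: both arguments run through Sernesi's equisingular sheaf $\mathcal N'_{\Gamma'/Q}$ (defined in the paper as the cokernel of $\mathcal T_{\Gamma'}\rightarrow \mathcal T_{Q}|_{\Gamma'}$, equivalently your kernel of $\mathcal N_{\Gamma'/Q}\rightarrow \mathcal T^{1}_{\Gamma'/Q}$) and the criterion that $H^{1}(\mathcal N'_{\Gamma'/Q})=0$ forces smoothability. The one genuine difference is how $H^{1}(\mathcal N_{\Gamma'/Q})=0$ is obtained. You prove it directly, restricting $\mathcal N_{\Gamma'/Q}$ to the two components and comparing with $\mathcal N_{\Gamma/Q}$ and $\mathcal N_{\gamma/Q}$ via length-$\delta$ skyscraper cokernels (the Hartshorne--Hirschowitz elementary-modification picture); that argument is valid and your local computation at a node is the right one. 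The paper instead gets this vanishing for free at the very end: since $\mathcal N'_{\Gamma'/Q}\hookrightarrow \mathcal N_{\Gamma'/Q}$ has cokernel $\mathcal T^{1}_{\Gamma'/Q}$ supported at the nodes, $H^{1}(\mathcal N'_{\Gamma'/Q})=0$ already implies $H^{1}(\mathcal N_{\Gamma'/Q})=0$. So your first half, while correct, becomes redundant once the second half is in place. Conversely, the step you flag as the ``main obstacle'' --- assembling $\mathcal N'_{\Gamma'/Q}$ from the two branches --- is precisely what the paper makes explicit by importing from \cite{S} the two exact sequences
$$ 0\rightarrow \mathcal I_{\Gamma/\Gamma'}\otimes \mathcal N_{\Gamma'/Q} \rightarrow \mathcal N'_{\Gamma'/Q}\rightarrow \mathcal N_{\Gamma/Q}\rightarrow 0, \qquad 0\rightarrow \mathcal N_{\gamma/Q}(-P_{1}-\cdots-P_{\delta})\rightarrow \mathcal I_{\Gamma/\Gamma'}\otimes \mathcal N_{\Gamma'/Q}\rightarrow \mathcal T^{1}_{\Gamma'/Q}\rightarrow 0, $$
whose cohomology sequences consume exactly the three hypothesized vanishings together with $H^{1}$ of a torsion sheaf. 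Writing out your ``analogous restriction sequences'' for $\mathcal N'$ would reproduce these; that is the only detail you leave open, and it is the heart of the proof rather than a routine verification, so it should be spelled out in a final write-up.
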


 \begin{proof}
Let us first set up some notation. For a connected reduced curve $C$ on $Q$, denote $\mathcal N'_{C/Q}$ as the cokernel   
 of the map $\mathcal T_{C}\rightarrow \mathcal T_{Q|C}$ and let $\mathcal T^{1}_{C/Q}$ be the cotangent sheaf of $C$ in $Q$. 
 $\mathcal T^{1}_{C/Q}$ can be defined as the cokernel of the map $\mathcal N'_{C/Q}\rightarrow \mathcal N_{C/Q}$. 
 Suppose the singularities of $C$ are only nodes. Then $\mathcal T^{1}_{C/Q}$ is a torsion sheaf supported on each node of $C$. 
 Furthermore, if $H^{1}(\mathcal N'_{C/Q}) = 0$, by the argument of \cite[Proposition 1.6]{S}, $C$ is smoothable in $Q$. 
 
 Now in our case, the ideal sheaves $\mathcal I_{\Gamma/\Gamma'}\cong \mathcal O_{\gamma}(-P_{1}-\ldots - P_{\delta})$  
 and $\mathcal I_{\gamma/\Gamma'}\cong \mathcal O_{\Gamma}(-P_{1}-\ldots-P_{\delta})$. As in \cite[Lemma 5.1]{S}, we can also
 establish two exact sequences of sheaves on $\Gamma'$,
 $$ 0\rightarrow \mathcal I_{\Gamma/\Gamma'}\otimes \mathcal N_{\Gamma'/Q} \rightarrow \mathcal N'_{\Gamma'/Q}\rightarrow \mathcal N_{\Gamma/Q}\rightarrow 0, $$
 $$ 0\rightarrow \mathcal N_{\gamma/Q}(-P_{1}-\ldots-P_{\delta})\rightarrow \mathcal I_{\Gamma/\Gamma'}\otimes \mathcal N_{\Gamma'/Q}\rightarrow \mathcal T^{1}_{\Gamma'/Q}\rightarrow 0. $$
By the assumption and the fact that $H^{1}(\mathcal T^{1}_{\Gamma'/Q}) = 0$, we get $H^{1}(\mathcal N'_{\Gamma'/Q})=0$ from the long exact sequences of cohomology. 
Hence, $\Gamma'$ is smoothable in $Q$. Moreover, the map $\mathcal N'_{\Gamma'/Q}\rightarrow \mathcal N_{\Gamma'/Q}$ is injective 
and its cokernel $T^{1}_{\Gamma'/Q}$ is supported at the nodes. So $H^{1}(\mathcal N'_{\Gamma'/Q})=0$ implies that $H^{1}(\mathcal N_{\Gamma'/Q})=0$. 
\end{proof}

We still need another source curve $\gamma$. Here we will consider rational normal curves in $\mathbb P^{4}$ that lie on the quadric $Q$. 

\begin{lemma}
\label{RC}
Let $R=Q \cap H$ be a general hyperplane section of $Q$. $P_{1},\ldots, P_{m}\in R$ are $m\geq 6$ points in general position. For any integer $\delta\leq 4$, 
there exists a rational normal curve $\gamma \subset R$ such that $\gamma$ passes through exactly $\delta$ points of $P_{1},\ldots, P_{m}$. Furthermore, suppose those points on $\gamma$ are 
$P_{1},\ldots, P_{\delta}$. Then we have $H^{1}(\mathcal N_{\gamma/Q}(-P_{1}-\ldots - P_{\delta}))=0$.
\end{lemma}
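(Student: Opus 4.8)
The plan is to make the ambient geometry completely explicit and thereby reduce the vanishing $H^{1}(\mathcal N_{\gamma/Q}(-P_{1}-\cdots-P_{\delta}))=0$ to the vanishing of $H^{1}$ of two line bundles on $\mathbb P^{1}$. Since $Q$ is smooth and $H$ is general, Bertini shows that $R=Q\cap H$ is a smooth quadric surface in $H\cong\mathbb P^{3}$, so $R\cong\mathbb P^{1}\times\mathbb P^{1}$ and $\mathcal O_{R}(1)=\mathcal O_{R}(1,1)$ under the Segre identification. A rational normal curve contained in $R\subset\mathbb P^{3}$ is then a twisted cubic, that is, a smooth irreducible member of the linear system $|\mathcal O_{R}(2,1)|$ (after possibly exchanging the two rulings).

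First I would settle existence. Since $h^{0}(R,\mathcal O_{R}(2,1))=6$, the system $|\mathcal O_{R}(2,1)|$ has dimension $5$, and passing through a point of $R$ is one linear condition. For $\delta\leq 4$ the members through $P_{1},\dots,P_{\delta}$ form a family of dimension $5-\delta\geq 1$. As $\mathcal O_{R}(2,1)$ is base-point free, Bertini gives a general member $\gamma$ that is smooth away from $P_{1},\dots,P_{\delta}$, and since the reducible and singular loci form proper closed subfamilies, a general-position argument makes $\gamma$ a genuine smooth irreducible twisted cubic. Finally, because $\delta<5$ and the $P_{i}$ are in general position, meeting any further $P_{j}$ is a nontrivial additional condition, so the general such $\gamma$ passes through exactly $P_{1},\dots,P_{\delta}$ among the given points.

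For the vanishing I would compute $\mathcal N_{\gamma/Q}$ from the flag $\gamma\subset R\subset Q$ using
$$0\rightarrow \mathcal N_{\gamma/R}\rightarrow \mathcal N_{\gamma/Q}\rightarrow \mathcal N_{R/Q}|_{\gamma}\rightarrow 0.$$
Here $\mathcal N_{\gamma/R}=\mathcal O_{R}(2,1)|_{\gamma}$ has degree $(2,1)\cdot(2,1)=4$, hence equals $\mathcal O_{\mathbb P^{1}}(4)$; and since $R$ is a hyperplane section of $Q$ we have $\mathcal N_{R/Q}=\mathcal O_{Q}(1)|_{R}=\mathcal O_{R}(1,1)$, whose restriction to $\gamma$ has degree $(1,1)\cdot(2,1)=3$, hence equals $\mathcal O_{\mathbb P^{1}}(3)$. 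Writing $D=P_{1}+\cdots+P_{\delta}$ (degree $\delta$ on $\gamma\cong\mathbb P^{1}$) and twisting gives
$$0\rightarrow \mathcal O_{\mathbb P^{1}}(4-\delta)\rightarrow \mathcal N_{\gamma/Q}(-D)\rightarrow \mathcal O_{\mathbb P^{1}}(3-\delta)\rightarrow 0.$$
For $\delta\leq 4$ both outer exponents are $\geq -1$, so $H^{1}(\mathcal O_{\mathbb P^{1}}(4-\delta))=H^{1}(\mathcal O_{\mathbb P^{1}}(3-\delta))=0$, and the long exact cohomology sequence forces $H^{1}(\mathcal N_{\gamma/Q}(-D))=0$. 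This is precisely where the hypothesis $\delta\leq 4$ enters.

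The hard part will not be the cohomology, which is automatic once the sub- and quotient line bundles are pinned down, but the existence step: checking that a general member of the $(5-\delta)$-dimensional subsystem is a smooth irreducible twisted cubic meeting the configuration in exactly $\delta$ points. This is where $m\geq 6$, general position, and $\delta\leq 4$ are all used, and it requires the standard Bertini-plus-dimension-count argument rather than anything deeper.
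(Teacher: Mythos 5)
Your proposal is correct and follows essentially the same route as the paper: the paper's own (much terser) proof also realizes $\gamma$ as a twisted cubic on the smooth quadric surface $R$ and deduces the vanishing from the exact sequence $0\rightarrow \mathcal N_{\gamma/R}\rightarrow \mathcal N_{\gamma/Q}\rightarrow \mathcal N_{R/Q}\otimes\mathcal O_{\gamma}\rightarrow 0$. Your explicit identification of the two line bundles as $\mathcal O_{\mathbb P^{1}}(4)$ and $\mathcal O_{\mathbb P^{1}}(3)$, and the dimension count in $|\mathcal O_{R}(2,1)|$, simply fill in the details the paper leaves to the reader.
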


\begin{proof}       
$R$ is a smooth quadric surface in $H\cong \mathbb P^{3}$. It is easy to find a degree 3 smooth rational curve $\gamma$ on $R$ that passes through 
$\delta$ general points, say, $P_{1},\ldots, P_{\delta}$. By the exact sequence 
$$ 0 \rightarrow \mathcal N_{\gamma/R} \rightarrow \mathcal N_{\gamma/Q} \rightarrow \mathcal N_{R/Q}\otimes \mathcal O_{\gamma} \rightarrow 0, $$ we can also get  
$H^{1}(\mathcal N_{\gamma/Q}(-P_{1}-\ldots - P_{\delta}))=0$.
\end{proof} 

Now we have all the ingredients to prove the second part of Theorem \ref{quadric}. 
\begin{proof}
Take a determinantal curve $\Gamma\subset Q$ whose degree $d_{\Gamma} = t(t+1)$ and genus $g_{\Gamma} = \frac{2}{3}t^{3}-\frac{1}{2}t^{2}-\frac{7}{6}t+1$.    
Consider a general hyperplane section of $\Gamma$. We get $d_{\Gamma}$ points in general position. By Lemma \ref{SM} and \ref{RC}, we can pick a suitable 
degree $3$ rational curve $\gamma$, such that $\Gamma\cap\gamma$ consists of $\delta$ reduced points for any $\delta\leq 4$ and the nodal union $\Gamma \cup\gamma$ is 
smoothable. Hence, starting from the pair $(d_{\Gamma}, g_{\Gamma})$, we can get a new pair $(d' = d_{\Gamma} + 3, g' = g_{\Gamma}+\delta - 1)$ where the Hilbert scheme 
$\mathcal H_{d',g'}(Q)$ also has a component of expected dimension $3d'$. Do the same step again and eventually it covers every pair $(d, g)$ in the form $(d_{\Gamma} + 3k, g_{\Gamma} + h)$, $0\leq h\leq 3k$.

Now we fix $d$. Note that $d_{\Gamma}=t(t+1)\equiv 0$ or 2 (mod 3). So if $d \equiv 0$ (mod 3), by the above construction, the range of $g$ for which 
$\mathcal H_{d,g}(Q)$ has a component of dimension $3d$ contains the following,
$$\frac{1}{6}(4t^{3}-3t^{2}-7t+6)\leq g \leq \frac{1}{6}(4t^{3}-3t^{2}-7t+6) + 3\cdot\frac{d -t(t+1)}{3}, $$      
for any $t(t+1) \leq d$ and $t\equiv 0$ or 2 (mod 3). 
In order to cover the case $d \equiv 1$ or 2 (mod 3), we can use a suitable line $l$ on $Q$ instead of the rational curve $\gamma$ in Lemma \ref{RC} such that $l$
intersects the source curve only at one point $P$. One can easily check that $H^{1}(\mathcal N_{l/Q}) = H^{1}(\mathcal N_{l/Q}(-P)) = 0$ hold. Then after smoothing the nodal union of $l$ and the source curve, this construction provides $(d-1,g)\rightarrow (d,g)$. So if $d\not\equiv 0$ (mod 3),
we can always consider $d-1$ or $d-2$ instead. 
In sum, the desired range of genus includes  
$$ L(t) =\frac{1}{6}(4t^{3}-3t^{2}-7t+6)\leq g \leq \frac{1}{6}(4t^{3}-3t^{2}-7t+6) + d -t(t+1) - 2 = R(t), $$   
where $t(t+1)\equiv 0$ (mod 3). 
Since $t\equiv 0$ or 2 (mod 3), each time $t$ increases by 1 or 2. In order to avoid that the value of $g$ jumps for a fixed $d$, 
we have to require that $L(t+2)\leq R(t)$. Solving this inequality and plugging the upper bound of $t$ into $R(t)$, 
we get the desired range of $g$ up to $\frac{2}{15\sqrt{5}} d\sqrt{d} + O(d)$.      
\end{proof}   

\begin{remark}
We can obtain a similar result for the Hilbert scheme of curves on a general cubic threefold Y. It is easy to check that the curve $C$ cut out by a determinantal surface and $Y$ also satisfies $H^{1}(\mathcal N_{C/Y})=0$. However, when we resume the process to quartic threefolds, the determinantal 
model does not work any longer. Another long standing problem is about quintic threefolds, since the expected dimension of the Hilbert scheme is 0 in that case. Even for rational curves, the famous Clemens' conjecture has been only solved when the degree of the curve is small. If we consider threefolds of higher degree, things become further unclear. To the author's best knowledge, 
we even do not know if a general threefold of degree $k>5$ in $\mathbb P^{4}$ contains an irreducible curve whose degree is not divisible by $k$. In sum, the Hilbert scheme of curves on a threefold of higher degree remains mysterious to us.
\end{remark}

Department of Mathematics, Statistics and Computer Science,
University of Illinois at Chicago, 851 S Morgan St, Chicago, IL 60607 \par
{\it Email address:} dwchen@math.uic.edu

\end{document}